\documentclass[11pt]{article}
\usepackage[T1]{fontenc}
\usepackage[utf8]{inputenc}
\usepackage{lmodern}
\usepackage{libertine}
\usepackage[a4paper,margin=3cm]{geometry}
\usepackage{microtype}
\usepackage{amsmath,amssymb,amsthm,mathtools}
\usepackage{mathrsfs}
\usepackage{booktabs}
\usepackage{enumitem}
\usepackage[font=small,labelfont=bf]{caption}
\usepackage{xcolor}
\usepackage{tikz}
\definecolor{linkblue}{RGB}{0,70,140}
\usepackage[
  colorlinks=true,
  linkcolor=linkblue,
  citecolor=linkblue,
  urlcolor=linkblue,
  bookmarksopen=true,
  pdfauthor={Enrique Zuazua},
  pdftitle={The Coercivity Gap in Neural PDE Solvers: Parameter Escape and State Convergence}
]{hyperref}
\usepackage[capitalise,noabbrev]{cleveref}
\allowdisplaybreaks
\newtheorem{theorem}{Theorem}[section]
\newtheorem{proposition}[theorem]{Proposition}
\newtheorem{lemma}[theorem]{Lemma}
\newtheorem{corollary}[theorem]{Corollary}
\newtheorem{remark}[theorem]{Remark}

\newcommand{\R}{\mathbb{R}}
\newcommand{\N}{\mathbb{N}}
\newcommand{\M}{\mathcal{M}}
\newcommand{\TV}{\mathrm{TV}}
\newcommand{\supp}{\operatorname{supp}}
\newcommand{\dd}{\,\mathrm{d}}
\newcommand{\ip}[2]{\left\langle #1,#2\right\rangle}
\title{The Coercivity Gap in Neural PDE Solvers:\\
Parameter Escape and State Convergence}

\author{%
  Enrique Zuazua\textsuperscript{[1,2,3]}\\[0.9em]
  \parbox{0.88\textwidth}{%
    \centering\small
    \textbf{[1]} Friedrich--Alexander--Universit\"at Erlangen--N\"urnberg,
    Department of Mathematics, Chair for Dynamics, Control, Machine Learning
    and Numerics (Alexander von Humboldt Professorship), Cauerstr.~11,
    91058 Erlangen, Germany.\\[0.4em]
    \textbf{[2]} Chair of Computational Mathematics, University of Deusto,
    48007 Bilbao, Basque Country, Spain.\\[0.4em]
    \textbf{[3]} Departamento de Matem\'aticas, Universidad Aut\'onoma de Madrid,
    28049 Madrid, Spain.\\[0.6em]
    \texttt{enrique.zuazua@fau.de}%
  }%
}

\date{}

\begin{document}

\maketitle

\begin{abstract}
We study variational PDE solvers based on neural-network ansatzes. In this
setting, the PDE energy is minimized over a nonlinear family of realizations
rather than over a linear trial space. The resulting optimization problem may
have minimizing sequences for which the parameters diverge while the
corresponding states remain bounded and even converge in the natural energy
norm. We call this mismatch the \emph{coercivity gap}.

We first formulate the phenomenon abstractly and then analyze a whole-space
elliptic model with fixed-variance Gaussian mixtures. In this model, the
relevant mechanism is the collision of two active neurons. The realized states
converge to a Gaussian derivative, while the parameters escape and the reduced
minimum is not attained. Under suitable regularity and decay assumptions on
the forcing term, the states still converge in the energy norm, with an
explicit logarithmic rate.

We also discuss measure-valued relaxations, Tikhonov regularization, residual
minimization in stability norms, and a state-space HYCO formulation for hybrid
physics--data problems with bounded observations.
\end{abstract}

\noindent\textbf{Keywords.}
Neural PDE solvers; coercivity gap; state convergence; parameter escape;
variational approximation; residual minimization; nonattainment; Gaussian
neural networks; HYCO.

\medskip
\noindent\textbf{MSC 2020.}
35J20; 41A30; 49J45; 65N35; 68T07.

\section{Introduction}

\subsection{Problem formulation and main results}
Physics-informed neural networks, the Deep Ritz method, and related variational
or residual formulations are now widely used for partial differential equations
(PDEs); see
\cite{raissi2019pinn,yu2018deep,karniadakis2021pinnreview,mishra2023estimates,
doumeche2025,deryck2024error} and the overview \cite{BBZ}. Their appeal is clear
in high-dimensional and mesh-free settings, but replacing a linear trial space
by a nonlinear realization class changes the
geometry of the approximation problem: classical questions of approximation,
stability, compactness, and convergence acquire new forms, because the topology
that is natural for the physical state need not control the parameters of its
neural representation.

The central phenomenon is the \emph{coercivity gap}. A PDE energy may be
coercive on a function space, but once it is pulled back through a nonlinear
neural parametrization, compactness in parameter space may be lost. In
particular, a sequence of parameter vectors may become unbounded while the
corresponding neural functions remain bounded, or even converge strongly, in
the natural norm.

Figure~\ref{fig:coercivity-gap} illustrates this split. The parameter sequence
may escape, while the realized states remain controlled by the PDE energy. In
that case, the relevant quantity to monitor is the state error, not the
convergence of a particular parameter representation.

This point is important in practice. Parameter escape, large weights, or nearly
colliding centers do not by themselves mean that the PDE approximation has
failed. They may simply reflect an ill-conditioned or nonunique
parametrization, while the computed state is already accurate in the relevant
energy or residual norm. In numerical work, the state, the energy gap, the
residual in a stability norm, and the data misfit are the main objects of
interest; the parameters are only a way to generate them.

\begin{figure}[ht]
\centering
\begin{tikzpicture}[x=1cm,y=1cm,font=\small,>=stealth]
\begin{scope}
  \node[font=\bfseries] at (0,1.95) {Parameter world};
  \draw[black!45,thick] (0,0) ellipse (2.05 and 1.10);
  \draw[black!30,dashed] (0,0) ellipse (2.42 and 1.36);
  \node[font=\scriptsize,black!60] at (-1.38,-1.34) {bounded region};

  \draw[green!50!black,thick,->]
    (-1.55,-0.35) .. controls (-1.10,-0.12) and (-0.65,0.02) .. (-0.22,0.10);
  \foreach \p in {(-1.55,-0.35),(-1.08,-0.13),(-0.62,0.01)}
    \fill[green!50!black] \p circle (1.7pt);
  \fill[green!50!black] (-0.22,0.10) circle (2.3pt);
  \node[align=center,green!35!black,font=\scriptsize] at (-0.88,-0.84)
    {bounded parameters\\ optimizer found};

  \draw[red!70!black,thick,->]
    (-0.55,0.50) .. controls (0.34,0.72) and (1.30,0.82) .. (2.12,0.92)
    .. controls (2.55,0.98) and (2.88,1.12) .. (3.18,1.37);
  \foreach \p in {(-0.55,0.50),(0.25,0.70),(1.05,0.81),(1.76,0.90),(2.12,0.92)}
    \fill[red!70!black] \p circle (1.7pt);
  \node[align=center,red!70!black,font=\scriptsize] at (1.50,1.34)
    {escaping parameters};
  \node[align=center,red!70!black,font=\scriptsize] at (3.08,1.63)
    {infinity};
\end{scope}

\begin{scope}[xshift=7.15cm]
  \node[font=\bfseries] at (0,1.95) {State world};
  \fill[blue!7] (0,0) ellipse (2.30 and 1.13);
  \draw[blue!58!black,thick] (0,0) ellipse (2.30 and 1.13);
  \draw[black!40,dashed,thick] (0,0) ellipse (2.65 and 1.38);
  \node[blue!45!black,font=\scriptsize] at (-1.55,-0.82)
    {finite ansatz states};
  \node[black!55,font=\scriptsize] at (-1.38,1.16)
    {state closure};

  \draw[green!50!black,thick,->]
    (-1.45,-0.30) .. controls (-1.12,-0.02) and (-0.72,0.10) .. (-0.36,0.16);
  \foreach \p in {(-1.45,-0.30),(-1.08,-0.03),(-0.72,0.10)}
    \fill[green!50!black] \p circle (1.7pt);
  \fill[green!50!black] (-0.36,0.16) circle (2.4pt);
  \node[align=center,green!35!black,font=\scriptsize] at (-1.18,0.72)
    {target inside\\ the ansatz};
  \draw[green!50!black,->] (-0.96,0.53) -- (-0.45,0.22);

  \draw[red!70!black,thick,->]
    (-0.05,0.50) .. controls (0.58,0.32) and (1.32,0.17) .. (2.20,0.02);
  \foreach \p in {(-0.05,0.50),(0.58,0.32),(1.22,0.18),(1.78,0.08)}
    \fill[red!70!black] \p circle (1.7pt);
  \draw[red!70!black,fill=white,thick] (2.20,0.02) circle (2.7pt);
  \node[align=center,red!70!black,font=\scriptsize] at (1.25,-0.78)
    {states converge};
  \node[align=center,red!70!black,font=\scriptsize] at (2.23,0.68)
    {target on\\ the boundary};
  \draw[red!70!black,->] (2.04,0.45) -- (2.16,0.10);
\end{scope}

\draw[->,thick,black!70] (3.10,0.55) -- (4.45,0.55)
  node[midway,above,font=\scriptsize] {realization map};
\draw[->,thick,black!70] (3.10,-0.45) -- (4.45,-0.45)
  node[midway,below,font=\scriptsize] {same map};
\node[align=center,font=\scriptsize,black!70] at (3.78,-1.13)
  {coercive control here\\ need not control this side};
\end{tikzpicture}
\caption{The coercivity gap. The algorithm searches in parameter space, while
the PDE energy controls the realized state. When the target is represented
inside the finite ansatz, a minimizing sequence may stay bounded. When the
target is reached only as a boundary state, the realized states may converge
even though the corresponding parameters escape.}
\label{fig:coercivity-gap}
\end{figure}

This distinction can be hidden by the way the discrete problem is formulated.
Some analyses introduce an auxiliary constrained or regularized parameter
problem: the weights are restricted to a bounded set, compactness is imposed
a priori, or a direct parameter penalty is added. Such devices are useful for
proving existence or convergence, but they should not be confused with
coercivity of the original PDE loss pulled back through the neural
parametrization. By imposing compactness at the parameter level, they rule out
parameter escape by assumption. The coercivity gap studied here concerns the
untruncated realization problem, where the PDE energy controls the realized
state but need not control the parameters that represent it.

We first formulate this phenomenon abstractly, and then study it for a
whole-space elliptic equation with fixed-variance Gaussian mixtures. In that
model the neural states converge quantitatively, while the reduced optimization
problem has no minimizer and no bounded minimizing parameter sequence.

The main contributions of this paper are the following.
\begin{enumerate}[label=(\roman*),leftmargin=*]
\item a Hilbert-space abstract description of the coercivity gap phenomenon;
\item a whole-space, fixed-variance Gaussian construction in \(H^1(\R^d)\):
      two distinct active neurons collide and converge to a Gaussian derivative
      outside every finite realization class; a suitable elliptic right-hand
      side then yields an unattained reduced infimum and forces every
      parameter-space minimizing sequence to escape;
\item quantitative state estimates, including a constructive whole-space
      \(O((\log P)^{-1/2})\) bound under weighted regularity, with the neuron
      count made explicit; and
\item extensions to measure relaxation, Tikhonov regularization, residual
      minimization, and a HYCO-type hybrid formulation, always in norms
      compatible with PDE stability.
\end{enumerate}

We use fixed-variance Gaussian radial-basis functions
(see \cite{buhmann2000,madych1992,wendland2004}) because the collision
mechanism, the closure defect, and the approximation estimates can be written
explicitly. No claim of architectural optimality is intended. The point is
rather to show, in a model where the mechanism can be followed by hand, that
parameter escape can coexist with convergence in the PDE energy norm.

Most of the ingredients are known in approximation theory and neural-network
topology; see the bibliographical discussion below. The point here is to place
them inside a variational, or residual, PDE setting where nonattainment and
parameter escape occur together with strong state convergence.

The estimates also clarify why good state-level performance can coexist with
poor parameter behavior. This gives a mathematical reason for a common
computational observation: neural PDE solvers may produce reliable fields even
when their weights, centers, or other coordinates are hard to interpret.

The main point is a state-space one. A neural PDE solver maps parameters to a
physical field, and the PDE energy, residual, or observation loss depends on
that field. Divergence or nonuniqueness of the coordinates is therefore a
conditioning or identifiability issue, not a PDE error by itself. Conversely,
a small training loss is meaningful for the PDE only when it is measured in a
norm that controls the state.

\subsection{Literature review}

The parameter--realization distinction has been analyzed in the context of the
existence of best neural approximants \cite{girosi1990}. The
instability of best-approximation maps, including for Gaussian radial-basis
networks, was studied in \cite{kainen1999}. For fixed architectures,
\cite{petersen2021} analyzed the nonclosedness of realization sets for broad
classes of activations, the failure of inverse stability, and the resulting
blow-up of weights along near-best approximation sequences when a best
approximation is not attained. In \cite{mahan2021}, nonclosedness phenomena
were extended to Sobolev topologies by means of two-neuron difference quotients
of the form
\begin{equation}\label{collapse}
 n\bigl[\rho(\,\cdot+1/n)-\rho(\,\cdot)\bigr]\longrightarrow \rho',
\end{equation}
which provide examples of Sobolev convergence accompanied by unbounded
parameter growth. This construction is the direct approximation-theoretic
precursor of the collision mechanism used below. Related questions about
closedness and existence of optimal parameters, in particular for sparse ReLU
architectures, are studied in \cite{le2023}. 

Here nonclosedness, nonattainment, and parameter escape are tied to a coercive
elliptic energy and to the approximation of a PDE solution. At a broader
analytical level, cancellation between nearby modes is also a classical device
for constructing localized wave packets \cite{zuazua2005waves}.

We also use the term \emph{neuron condensation} in a narrower sense. In the
training-dynamics literature, it usually refers to a regime in which many
neurons concentrate along a small number of feature-space directions
\cite{luo2021phase,zhou2022condensation}. The mechanism here is different.
Two or more fixed-variance Gaussian centers coalesce, their signed amplitudes
diverge, and the realized functions converge to a Gaussian derivative, as in
\eqref{collapse}.

A complementary line of work establishes convergence at the level of the
realized physical states. For PINNs and residual minimization methods, several
results prove convergence under suitable assumptions on PDE stability,
regularity, sampling, approximation, and optimization
\cite{shin2020,doumeche2025,deryck2024error}. For the Deep Ritz method,
\(\Gamma\)-convergence and equicoercivity arguments similarly imply convergence
of realizations of global quasi-minimizers
\cite{muller2019deepritz,dondl2022}. Coercivity and compactness have also been
used as a stability framework for neural PDE approximations
\cite{gazoulis2023}.

These works establish convergence at the state level. Our point is different.
The realized states may converge strongly while the parameters representing them
are noncompact or divergent. This is the parameter--state separation that the
present paper isolates.

The closest PDE precedent for this parameter--state separation is
\cite{bertoluzza2024}, in the setting of weak adversarial networks (WANs). There
the authors observe that neural trial classes need not be closed, analyze weakly
convergent minimizing sequences, derive quasi-best-approximation bounds for the
limiting states, and note that parameter norms must become unbounded when the
limit does not belong to the neural class. Their work is therefore close in
spirit to the mechanism studied here. The present paper formulates the issue as
a variational gap between coercivity in state space and compactness in parameter
space.

A separate source of ill-posedness arises when a continuous variational or
residual functional is replaced by finitely many measurements. Recent work
shows that quadrature-based Deep Ritz losses and weak PINN formulations can
then possess nonunique or degenerate minimizers \cite{langer2026}. This
finite-information mechanism is distinct from ours: the Gaussian collision
below already causes nonattainment for the exact continuous coercive energy,
before sampling or quadrature is introduced.

Measure-valued relaxed formulations give a convex way of looking at neural
approximation \cite{bach2017,chizat2018,savarese2019}. The same issue persists
there: unless the total variation of the representing measures is penalized,
the measures may diverge while the states still converge.
Indeed, the Gaussian relaxation considered below gives an explicit example of this limitation: the finite-dimensional collision disappears as a parametrization issue, but a related lack of coercivity may persist at the level of representing measures.

A related but distinct issue appears in finite element approximations of sharp
Sobolev constants \cite{ignatzuazua}. There the approximation spaces are linear, but the
variational problem has a nonlinear manifold of minimizers, and small energy
deficit must be converted into proximity to this manifold, modulo the relevant
symmetries. This is different from the parameter--state coercivity gap studied
here, but it reflects the same general principle that variational convergence
depends on identifying the correct object of stability.

The contribution is to place this nonclosedness mechanism inside a coercive PDE
energy and to quantify the resulting state convergence. This separates the
physical stability estimate from the compactness, or lack of compactness, of
the chosen neural parametrization.

\subsection{Structure of the paper}
The paper proceeds as follows. \Cref{sec:abstract} gives the abstract stability
estimates. \Cref{sec:model} introduces the Gaussian elliptic model.
\Cref{sec:escape} proves the collision, nonclosedness, and nonattainment
results. State convergence and rates are proved in
\Cref{sec:state-convergence}. Relaxation,
regularization, kernel choice, residual losses, and HYCO variants are discussed
in \Cref{sec:relaxation,sec:alternative-kernels,sec:residual-hybrid}. The
appendices give the weighted spectral and localization estimates.


\section{The abstract coercivity gap}
\label{sec:abstract}

\subsection{An abstract setting}

Let \(X\) be the physical state space, let \(E:X\to\R\) be a convex energy, and
let the nonlinear ansatz classes be generated by realization maps. For each
\(P\in\N\), we consider a parameter set \(\Theta_P\), a map
\[
\Phi_P:\Theta_P\longrightarrow X,
\]
and the realized class \(\mathcal A_P:=\Phi_P(\Theta_P)\subset X\).
Thus, the relevant objects are organized at three distinct levels:
\[
\text{parameters}
\quad\longrightarrow\quad
\text{realized states}
\quad\longrightarrow\quad
\text{energy values},
\]
or, more precisely,
\begin{equation}
\label{eq:three-levels}
\Theta_P
\xrightarrow{\ \Phi_P\ }
\mathcal A_P:=\Phi_P(\Theta_P)\subset X
\xrightarrow{\ E\ }
\R,
\qquad
\mathcal L_P:=E\circ\Phi_P.
\end{equation}

The coercivity gap appears precisely in this three-level structure.
Coercivity of the physical energy \(E\) controls the realized state
\(\Phi_P(\Theta)\) in \(X\), but it does not by itself control the parameter
vector \(\Theta\). Thus, bounded or even vanishing energy error may imply
compactness and convergence of the states, while the corresponding parameters
may have no bounded subsequence. The results below separate these two effects:
the abstract stability theorem explains why near-minimal energies imply
convergence of realized states, whereas the nonattainment and collision
results show how the associated minimizing parameters can escape.

The separation is therefore useful, not merely negative. It tells us that the
right compactness question for a PDE solver is first a compactness question for
the realized states. Parameter compactness is relevant for conditioning,
attainment, and algorithmic robustness, but it is not the same as convergence
of the numerical solution in the PDE topology. In particular, parameter escape
should be read as a warning about representation and training, not as automatic
evidence that the computed state is inaccurate.

More precisely, the following three questions concern different aspects of the problem and
should therefore be treated separately:
\begin{enumerate}[label=(\roman*),leftmargin=*]
\item \emph{State stability:} do near-minimal reduced losses imply that
      \(\Phi_P(\Theta)\) is close to the exact minimizer in \(X\)?
\item \emph{Attainment and parameter compactness:} is the infimum over
      \(\mathcal A_P\) attained, and can minimizing parameters remain bounded?
\item \emph{Training:} can a given algorithm reach such near-minimal losses,
      and at what cost?
\end{enumerate}

The next theorem addresses the first question. Under strong convexity, energy
accuracy gives strong convergence of realized states. It says nothing about
attainment or boundedness of parameters. In \Cref{sec:escape}, the Gaussian
collision answers the second question negatively for a concrete elliptic
energy: the reduced infimum is not attained and every minimizing parameter
sequence escapes.

Training dynamics are a separate issue. Our results start from global, or
sufficiently near-global, minimizing sequences and analyze the behavior of
their realized states in the PDE energy space. They do not explain how such
sequences are produced by a particular optimizer. That question depends on the
chosen algorithm, initialization, step sizes, regularization, and the nonconvex
loss landscape. We therefore separate the variational mechanism of state
convergence and parameter escape from the algorithmic problem of reaching the
relevant near-minimal energy levels.

We assume no closedness, injectivity, or compactness of the parametrization.
The only structural condition is that the realized classes are nested,
\[
\mathcal A_P\subset \mathcal A_{P+1},
\]
as is the case, for instance, when an additional zero-weight unit can be added.

The additional assumption used below is strong convexity, which converts an
energy error into a norm error for the realized states. Recall that
\(E:X\to\R\) is \(\lambda\)-strongly convex if, for every \(v,w\in X\) and
\(t\in[0,1]\),
\[
E((1-t)v+tw)
\le (1-t)E(v)+tE(w)
-\frac{\lambda}{2}t(1-t)\|v-w\|_X^2.
\]

The state-space argument has the usual structure of a
\(\Gamma\)-convergence proof; see Dal Maso \cite{dalmaso1993}. At that level
one combines a liminf, or stability, mechanism with a recovery mechanism
showing that the approximating classes envelop the limiting problem. In
Galerkin methods, finite-dimensional approximation properties provide the
recovery step; in neural methods, the analogous role is played by universal
approximation, or more generally by density of the realization classes.

This variational picture is clear as long as it is formulated in the state
space. It becomes less clear after pulling the problem back to parameters.
\(\Gamma\)-convergence and equicoercivity in the physical topology may imply
compactness and convergence of realized states, but they do not by themselves
give compactness of parameter representatives. The coercivity gap is precisely
this missing implication: the states may converge in the PDE energy space while
the parameters used by the numerical method escape.

Parameter constraints and regularizers remove this escape mechanism by
assumption. They are often useful, but they should be distinguished from
coercivity of the physical loss itself.

\subsection{Energy accuracy implies state accuracy}

Two errors enter the estimate. The best energy value over \(\mathcal A_P\)
measures the expressive accuracy of the class, whether or not that value is
attained. A computed state generally reaches it only up to an optimization
tolerance. The theorem turns these two quantities into a state-error bound.

\begin{theorem}[State stability for nonclosed realization classes]
\label{thm:abstract-state-stability}
Let \(X\) be a Hilbert space. Let \(E:X\to\R\) be coercive, weakly lower
semicontinuous, and
\(\lambda\)-strongly convex for some \(\lambda>0\). Let \(u\in X\) denote its unique
minimizer and define
\[
m_P:=\inf_{v\in\mathcal A_P}E(v),
\qquad
\alpha_P:=m_P-E(u)\ge0.
\]
Assume that there are \(a_P\in\mathcal A_P\) such that
\[
a_P\to u\quad\text{in }X,
\qquad
E(a_P)\to E(u).
\]
Then:

\begin{enumerate}[label=(\alph*),leftmargin=*]
\item The best energy errors satisfy
\begin{equation}
\label{eq:abstract-best-energy-error}
0\le\alpha_P\le E(a_P)-E(u)\longrightarrow0.
\end{equation}

\item If \(v_P\in\mathcal A_P\) satisfies
\[
E(v_P)\le m_P+\varepsilon_P,
\qquad \varepsilon_P\ge0,
\]
then
\begin{equation}
\label{eq:abstract-state-error-budget}
\|v_P-u\|_X^2
\le \frac{2}{\lambda}\bigl(\alpha_P+\varepsilon_P\bigr).
\end{equation}
In particular, \(v_P\to u\) strongly in \(X\) whenever
\(\varepsilon_P\to0\).

\item For fixed \(P\), every minimizing sequence
\((v_P^k)_k\subset\mathcal A_P\) is bounded in \(X\). If \(\bar v_P\) is any
of its weak accumulation points, then
\begin{equation}
\label{eq:abstract-relaxed-state-bound}
E(\bar v_P)\le m_P,
\qquad
\|\bar v_P-u\|_X^2\le\frac{2}{\lambda}\alpha_P.
\end{equation}
Thus every selection of fixed-\(P\) relaxed accumulation states converges
strongly to \(u\) as \(P\to\infty\).
\end{enumerate}
These conclusions do not require bounded or convergent parameters, or
attainment of \(m_P\) in \(\mathcal A_P\).
\end{theorem}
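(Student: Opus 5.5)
The whole argument will run in the state space \(X\); parameters never appear. The basic tool is the quadratic growth inequality at the minimizer. We apply \(\lambda\)-strong convexity with \(v=u\), \(w\in X\) arbitrary, and use \(E(u)\le E((1-t)u+tw)\). This gives
\[
E(u)\le (1-t)E(u)+tE(w)-\tfrac{\lambda}{2}t(1-t)\|w-u\|_X^2 .
\]
Subtracting \(E(u)\), dividing by \(t>0\) and letting \(t\downarrow0\) yields
\[
E(w)-E(u)\ge \tfrac{\lambda}{2}\|w-u\|_X^2 \qquad \text{for all } w\in X.
\]
Existence and uniqueness of \(u\) follow from coercivity, weak lower semicontinuity and strict convexity, as the statement already presupposes. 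Notice that this step uses only the one-sided convexity inequality and the minimality of \(u\), not differentiability of \(E\).

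For (a), note that \(\mathcal A_P\subset X\) gives \(m_P\ge E(u)\), so \(\alpha_P\ge0\). Since \(a_P\in\mathcal A_P\), we also have \(m_P\le E(a_P)\), and the hypothesis \(E(a_P)\to E(u)\) gives convergence to zero. Only the energy convergence of \(a_P\) is needed here; the convergence \(a_P\to u\) in \(X\) is redundant, and in fact implied by the quadratic growth bound.

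For (b), apply the quadratic growth inequality with \(w=v_P\):
\[
\tfrac{\lambda}{2}\|v_P-u\|_X^2\le E(v_P)-E(u)\le m_P+\varepsilon_P-E(u)=\alpha_P+\varepsilon_P .
\]
Combined with (a), this gives strong convergence whenever \(\varepsilon_P\to0\).

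For (c), fix \(P\) and let \((v_P^k)_k\) be minimizing. Then \(E(v_P^k)\) is bounded above, and coercivity (\(E(v)\to\infty\) as \(\|v\|_X\to\infty\)) gives boundedness in \(X\). Since \(X\) is Hilbert, weak accumulation points exist. If \(v_P^{k_j}\rightharpoonup\bar v_P\), weak lower semicontinuity gives \(E(\bar v_P)\le\liminf_j E(v_P^{k_j})=m_P\); note that \(\bar v_P\) need not lie in \(\mathcal A_P\). Applying the quadratic growth inequality at \(w=\bar v_P\), which is legitimate because that inequality holds on all of \(X\), yields \(\|\bar v_P-u\|_X^2\le\frac{2}{\lambda}\alpha_P\). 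Then (a) gives strong convergence of any selection of such states as \(P\to\infty\).

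There is no real obstacle. The only points needing care are deriving quadratic growth from the convexity definition without a gradient, and observing that (c) requires the growth inequality on all of \(X\) rather than just on \(\mathcal A_P\). The final remark of the statement holds because no step ever referred to \(\Theta_P\) or \(\Phi_P\) beyond \(\mathcal A_P\subset X\).
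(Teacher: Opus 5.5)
Your proposal is correct and follows the paper's proof essentially step for step. You derive the quadratic growth bound $E(w)-E(u)\ge\frac{\lambda}{2}\|w-u\|_X^2$ from strong convexity along $(1-t)u+tw$ with $t\downarrow0$, then sandwich $E(u)\le m_P\le E(a_P)$ for (a), apply the growth bound directly for (b), and combine coercivity, weak compactness and weak lower semicontinuity with the growth bound on all of $X$ for (c). Your side remark that the hypothesis $a_P\to u$ is implied by $E(a_P)\to E(u)$ is also correct.
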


\begin{proof}
Strong convexity and minimality of \(u\) imply
\begin{equation}
\label{eq:strong-convex-lower-bound}
E(v)-E(u)\ge \frac{\lambda}{2}\|v-u\|_X^2,
\qquad v\in X.
\end{equation}
Indeed, apply strong convexity to \((1-t)u+tv\), use the minimality of \(u\),
and let \(t\downarrow0\).

Since \(E(u)\le m_P\le E(a_P)\), the approximation hypothesis gives
\eqref{eq:abstract-best-energy-error}. For the near-minimizers in part (b),
\[
\frac{\lambda}{2}\|v_P-u\|_X^2
\le E(v_P)-E(u)
\le \alpha_P+\varepsilon_P,
\]
which proves \eqref{eq:abstract-state-error-budget}.

For part (c), coercivity makes each fixed-width minimizing sequence bounded.
Hilbert-space weak compactness then provides a weakly convergent subsequence.
Weak lower semicontinuity gives
\(E(\bar v_P)\le m_P\). Consequently,
\[
\frac{\lambda}{2}\|\bar v_P-u\|_X^2
\le E(\bar v_P)-E(u)
\le\alpha_P,
\]
which proves \eqref{eq:abstract-relaxed-state-bound}.
\end{proof}

\begin{remark}[The state-error estimate and the scope of the theorem]
\label{rem:abstract-error-budget}
Estimate \eqref{eq:abstract-state-error-budget} should be read as a
state-space estimate with the following features:
\begin{itemize}
    \item It depends only on the class approximation error \(\alpha_P\)
    and on the optimization tolerance \(\varepsilon_P\).
    \item It contains no bound on the parameter norm and no conditioning
    estimate for the neural parametrization.
    \item It assumes access to a global near-minimal energy level. Thus, it
    does not address the algorithmic question of how such a near-minimizer is
    found.
    \item In particular, the result does not make the pulled-back loss
    \(\mathcal L_P\) convex in the parameters and does not analyze gradient
    descent.
    \item Strong convexity is used only at the level of the physical energy.
    Its role is to convert a small energy gap into a direct norm estimate for
    the realized state.
    \item More general \(\Gamma\)-convergence arguments can yield qualitative
    convergence under weaker assumptions. The quadratic elliptic energy
    considered below provides the sharper identity needed for the explicit
    error bound.
\end{itemize}
\end{remark}

\subsection{Nonattainment and parameter escape}

\Cref{thm:abstract-state-stability} remains valid when \(\mathcal A_P\) is
nonclosed. A fixed-width minimizing sequence then has weak accumulation points
in the weak closure of \(\mathcal A_P\), but those relaxed states need not
belong to \(\mathcal A_P\). In the Gaussian collision below the convergence is
strong, and the limit lies in
\(\overline{\mathcal A_P}^{\,X}\setminus\mathcal A_P\). The PDE energy still
controls the states, while the reduced parameter problem may have no minimizer.
In finite dimensions, this forces parameter escape.

\begin{proposition}[Nonattainment forces parameter escape]
\label{prop:abstract-parameter-escape}
Fix \(P\), let \(\Theta_P=\R^{N_P}\), and suppose that
\(\Phi_P:\Theta_P\to X\) and \(E:X\to\R\) are continuous. If
\[
\inf_{\Theta\in\Theta_P}E(\Phi_P(\Theta))=m_P
\]
is not attained, then every parameter-space minimizing sequence
\((\Theta_k)_k\) satisfies \(\|\Theta_k\|\to\infty\).
\end{proposition}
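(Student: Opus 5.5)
The plan is to argue by contradiction using finite-dimensional compactness (Bolzano--Weierstrass) together with continuity of the composed loss \(\mathcal L_P=E\circ\Phi_P\). The key point is that ``\(\|\Theta_k\|\to\infty\)'' is a statement about \emph{every} subsequence. So it suffices to show that no minimizing sequence can have a bounded subsequence. Indeed, if \(\|\Theta_k\|\not\to\infty\), there exist \(R>0\) and a subsequence \((\Theta_{k_j})_j\) with \(\|\Theta_{k_j}\|\le R\) for all \(j\).

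First, I will note that \(\mathcal L_P:\R^{N_P}\to\R\) is continuous, since it is the composition of the continuous maps \(\Phi_P\) and \(E\). Next, given a bounded subsequence as above, I will apply Bolzano--Weierstrass in \(\R^{N_P}\) to extract a further subsequence converging to some \(\Theta_*\in\R^{N_P}\). Here I use that \(\Theta_P=\R^{N_P}\) is the whole space, so the limit is an admissible parameter; for a general parameter set one would need it to be closed. Continuity then gives
\[
\mathcal L_P(\Theta_*)=\lim_{j}\mathcal L_P(\Theta_{k_j})=m_P,
\]
because any subsequence of a minimizing sequence is still minimizing. Hence \(m_P=E(\Phi_P(\Theta_*))\) is attained, contradicting the hypothesis. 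It follows that every subsequence of \((\Theta_k)_k\) is unbounded, which is exactly \(\|\Theta_k\|\to\infty\).

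There is no real obstacle here. The only points needing care are logical. First, I must handle subsequences rather than the whole sequence, because the negation of divergence is the existence of a bounded subsequence, not boundedness of the sequence. Second, \(m_P\) should be finite for ``minimizing sequence'' to be meaningful as used. If \(m_P=-\infty\) the limit argument still yields the contradiction \(\mathcal L_P(\Theta_*)=-\infty\), which is impossible for a real-valued \(E\); so that case is also covered.

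Finally, I will remark that no property of \(E\) beyond continuity is used. In particular, the argument does not use strong convexity or coercivity, and it is independent of \Cref{thm:abstract-state-stability}. This is consistent with the paper's point that state-level coercivity plays no role in parameter compactness.
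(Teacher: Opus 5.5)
Your proposal is correct and follows the paper's proof essentially verbatim: negating \(\|\Theta_k\|\to\infty\) gives a bounded subsequence, Bolzano--Weierstrass in \(\R^{N_P}\) gives a further convergent subsequence, and continuity of \(E\circ\Phi_P\) then shows \(m_P\) is attained, contradicting nonattainment. Your remark on the case \(m_P=-\infty\) is a harmless addition that the paper does not spell out.
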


\begin{proof}
If \(\|\Theta_k\|\not\to\infty\), a subsequence lies in a bounded subset of
\(\R^{N_P}\). It has a further convergent subsequence,
\(\Theta_{k_\ell}\to\Theta_*\). Continuity then yields
\[
E(\Phi_P(\Theta_*))
=\lim_{\ell\to\infty}E(\Phi_P(\Theta_{k_\ell}))=m_P,
\]
contradicting nonattainment.
\end{proof}

The converse is false: parameters can diverge because of redundant
representations even when a minimizer exists. This is why
\Cref{sec:escape} removes inactive-neuron, permutation, and repeated-center
explanations and constructs a collision of two distinct active neurons. Its
nonattainment result is the architecture-specific complement to the
architecture-independent state theorem above.

The abstract framework will be instantiated in the next sections by a
whole-space elliptic equation and a fixed-variance Gaussian realization class.
There, \(X=H^1(\R^d)\), the energy is the elliptic functional \(J\), and the
realization map sends weights and centers to Gaussian mixtures. In that model,
the identity
\[
J(v)-J(u)=\frac12\|v-u\|_{H^1}^2
\]
makes the state stability mechanism completely explicit, while the Gaussian
collision construction shows that the corresponding parameter problem may have
no minimizer and that every minimizing parameter sequence escapes.

\subsection{Residual losses}

The same principle underlying the coercivity gap also appears in residual formulations. What is needed for state convergence is a lower stability estimate for the residual in the norm in which the PDE error is measured. After this state-space estimate has been fixed, approximation and optimization errors enter the error bound, but no corresponding coercive control of the neural parameters is implied.

\begin{proposition}[Abstract stable-residual estimate]
\label{prop:abstract-residual}
Let \(X\) and \(Y\) be Hilbert spaces, let \(B:X\to Y\) be bounded, and assume
that
\begin{equation}
\label{eq:abstract-stability}
\|z\|_X\le C_B\|Bz\|_Y,
\qquad z\in X.
\end{equation}
For \(g=Bu\), define
\[
\mathscr R_B(v):=\|Bv-g\|_Y^2,
\qquad
d_P:=\inf_{w\in\mathcal A_P}\|w-u\|_X.
\]
If \(v_P\in\mathcal A_P\) satisfies
\[
\mathscr R_B(v_P)
\le\inf_{w\in\mathcal A_P}\mathscr R_B(w)+\eta_P,
\qquad \eta_P\ge0,
\]
then
\begin{equation}
\label{eq:abstract-residual-error}
\|v_P-u\|_X
\le C_B\bigl(\|B\|d_P+\sqrt{\eta_P}\bigr).
\end{equation}
In particular, \(v_P\to u\) in \(X\) whenever \(d_P\to0\) and
\(\eta_P\to0\), irrespective of parameter compactness.
\end{proposition}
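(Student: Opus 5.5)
The plan is a direct chain of inequalities, with no compactness or attainment argument, so that parameter escape never enters.

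First, apply the stability estimate \eqref{eq:abstract-stability} to \(z=v_P-u\). Since \(g=Bu\), this gives
\[
\|v_P-u\|_X\le C_B\|B(v_P-u)\|_Y=C_B\|Bv_P-g\|_Y=C_B\,\mathscr R_B(v_P)^{1/2}.
\]
The problem is thereby reduced to bounding the square root of the achieved residual.

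Next, bound the residual infimum over \(\mathcal A_P\) by the best approximation distance. For any \(w\in\mathcal A_P\), boundedness of \(B\) gives \(\mathscr R_B(w)=\|B(w-u)\|_Y^2\le\|B\|^2\|w-u\|_X^2\). Taking the infimum over \(w\) yields
\[
\inf_{w\in\mathcal A_P}\mathscr R_B(w)\le\|B\|^2d_P^2 .
\]
Here one uses \(\inf_w\|w-u\|_X^2=d_P^2\), since \(t\mapsto t^2\) is increasing on \([0,\infty)\). This step needs neither attainment of \(d_P\) nor attainment of the residual infimum.

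Combining this with the near-minimality hypothesis gives \(\mathscr R_B(v_P)\le\|B\|^2d_P^2+\eta_P\). The elementary inequality \(\sqrt{a+b}\le\sqrt a+\sqrt b\) for \(a,b\ge0\) then gives \(\mathscr R_B(v_P)^{1/2}\le\|B\|d_P+\sqrt{\eta_P}\). Inserting this into the first step proves \eqref{eq:abstract-residual-error}. The convergence statement is immediate: the right-hand side tends to zero whenever \(d_P\to0\) and \(\eta_P\to0\). Since only \(v_P\) and \(\mathcal A_P\) appear, and never a parametrization, this holds irrespective of parameter compactness.

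There is no real obstacle. The only point needing care is the passage from infima of squares to squared infima, and the square-root splitting; both are elementary.
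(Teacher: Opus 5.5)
Your proposal is correct and follows the paper's proof exactly: apply the stability estimate to \(v_P-u\), then use near-minimality together with \(\inf_{w\in\mathcal A_P}\|B(w-u)\|_Y^2\le\|B\|^2d_P^2\), and finally split the square root. You additionally spell out the two elementary steps that the paper's three-line chain leaves implicit, namely passing from the infimum of squares to the squared infimum and using \(\sqrt{a+b}\le\sqrt a+\sqrt b\).
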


\begin{proof}
By stability and near-minimality,
\begin{align*}
\|v_P-u\|_X
&\le C_B\mathscr R_B(v_P)^{1/2}\\
&\le C_B
\left(
\inf_{w\in\mathcal A_P}\|B(w-u)\|_Y^2+\eta_P
\right)^{1/2}\\
&\le C_B\bigl(\|B\|d_P+\sqrt{\eta_P}\bigr).
\end{align*}
\end{proof}

\begin{remark}[Choice of residual norm]
The residual must be measured in a norm that controls the state error. The
essential PDE input is a stability estimate of the form
\[
\|v-u\|_X \le C\|Bv-f\|_Y.
\]
Thus the residual space \(Y\) is not arbitrary: it has to be chosen so that
this estimate is valid.

For a second-order elliptic problem, an \(H^{-1}\) residual naturally controls
the error in \(H^1\). By contrast, an \(L^2\) residual is tied to an
\(H^2\)-regularity framework. It controls an \(H^2\)-type error only when the
trial functions belong to the domain of the operator as a map into \(L^2\), and
when the corresponding elliptic regularity estimate holds for the domain,
coefficients, and boundary conditions under consideration.

If \(B\) has a nontrivial kernel, the residual controls only the distance to
the solution set. Boundary conditions, normalization constraints, or other side
conditions are then needed to recover uniqueness. For nonlinear operators, the
same principle applies, but the stability estimate is usually conditional: it
must hold on the part of the state space reached by the minimizing sequence.

If boundary or initial conditions are imposed through the loss, they must be
included in the residual norm, leading to a product-space formulation. These
are the alternatives developed in \Cref{sec:residual-hybrid}.
\end{remark}

\section{An elliptic model and a Gaussian realization class}
\label{sec:model}

\subsection{A coercive elliptic energy and nonclosed Gaussian classes}
\label{subsec:elliptic-gaussian-class}

We consider
\begin{equation}
\label{eq:elliptic}
-\Delta u+u=f
\qquad\text{in }\R^d,
\end{equation}
where \(f\in H^{-1}(\R^d)\). We use the energy norm
\[
\|v\|_{H^1}^2
:=\int_{\R^d}\bigl(|\nabla v|^2+|v|^2\bigr)\dd x
\]
and its associated dual norm. The weak solution \(u\in H^1(\R^d)\) is the
unique minimizer of
\begin{equation}
\label{eq:energy}
J(v):=\frac12\|v\|_{H^1}^2-\ip{f}{v}_{H^{-1},H^1}.
\end{equation}
Since \(u\) solves \eqref{eq:elliptic},
\begin{equation}
\label{eq:energy-identity}
J(v)-J(u)=\frac12\|v-u\|_{H^1}^2,
\qquad v\in H^1(\R^d).
\end{equation}
Thus \(J\) is coercive and \(1\)-strongly convex.

When \(J\) is restricted to a closed finite-dimensional linear Galerkin space,
these properties ensure a unique discrete minimizer and imply the usual
best-approximation stability estimate. The issue studied here arises because a
nonlinear realization class need not be linear or closed: coercivity of
\(J\) controls the state norm but does not make parameter sublevels compact.

Let
\begin{equation}
\label{eq:gaussian}
G(x):=(4\pi)^{-d/2}e^{-|x|^2/4},
\qquad \widehat G(\xi)=e^{-|\xi|^2},
\end{equation}
where \(\widehat v(\xi)=\int_{\R^d}e^{-ix\cdot\xi}v(x)\dd x\). For
\(P\ge1\), define the realization map
\begin{equation}
\label{eq:realization-map}
\Phi_P(\Theta)(x)
:=\sum_{j=1}^P w_jG(x-x_j),
\qquad
\Theta=((w_j,x_j))_{j=1}^P\in\R^{P(d+1)},
\end{equation}
and its image
\[
\M_P:=\Phi_P(\R^{P(d+1)})\subset H^1(\R^d).
\]
Although the parameter space has dimension \(P(d+1)\), \(\M_P\) is not a
smooth manifold globally: permutations, vanishing weights, repeated centers,
and weight splitting create singular strata and nonunique representations.
We therefore call \(\M_P\) a \emph{realization class}.

For example, if two centers coincide, then
\[
aG(\cdot-x)+bG(\cdot-x)=(a+b)G(\cdot-x),
\]
so the two parameter vectors \(((a,x),(b,x))\) and
\(((a+b,x),(0,y))\) represent the same element of \(\M_P\), for any
\(y\in\R^d\). Similarly, permuting the neurons or changing the center of a
zero-weight neuron does not change the realized function.

Set
\begin{equation}
\label{eq:reduced-infimum}
I:=J(u),
\qquad
I_P:=\inf_{v\in\M_P}J(v)
=\inf_{\Theta\in\R^{P(d+1)}}J(\Phi_P(\Theta)).
\end{equation}
The classes are nested because zero-weight neurons may be added. Finite linear
combinations of translates of \(G\) are dense in \(H^1(\R^d)\), a consequence
of Wiener's Tauberian theorem \cite{wiener1932}. Hence
\begin{equation}
\label{eq:minima-converge}
I_P\downarrow I.
\end{equation}
The reduced infimum \(I_P\) need not, however, be attained.

\begin{remark}[Fixed variance]
In the present ansatz, only the centers and output weights are allowed to vary;
the Gaussian variance is kept fixed. This restriction is intentional. It gives
a simple realization class in which the collision mechanism can be isolated
without additional degeneracies.

The fixed variance may, of course, be inefficient when the target function has
a very different characteristic scale. Allowing trainable variances would
enlarge the approximation class and could improve efficiency. At the same time,
it would introduce further noncompactness mechanisms, such as dilation or
concentration of the Gaussian profiles. These scale-related degeneracies are
not analyzed here.
\end{remark}

\subsection{Dictionary with the abstract coercivity-gap framework}
\label{subsec:gaussian-dictionary}

The abstract objects of \Cref{sec:abstract} specialize in the present Gaussian
elliptic model as follows:
\begin{center}
\begin{tabular}{@{}ll@{}}
\toprule
\textbf{Abstract object} & \textbf{Gaussian elliptic model}\\
\midrule
\(X\) & \(H^1(\R^d)\)\\
\(E\) & the elliptic energy \(J\) in \eqref{eq:energy}\\
\(u\) & the solution of \(-\Delta u+u=f\)\\
\(\Theta_P\) & \(P\) weights and \(P\) centers in \(\R^{P(d+1)}\)\\
\(\mathcal A_P\) & the Gaussian realization class \(\M_P\)\\
\(m_P\) & the reduced infimum \(I_P\)\\
\(\alpha_P\) & \(I_P-I\), the best energy error\\
\bottomrule
\end{tabular}
\end{center}
For this concrete problem, the strong-convexity estimate of
\Cref{thm:abstract-state-stability} becomes the exact identity \eqref{eq:energy-identity}.
Thus the abstract coercivity gap takes a particularly transparent form: the
elliptic energy controls convergence of the realized Gaussian states in
\(H^1(\R^d)\), while the reduced parameter problem may fail to attain its
infimum and its minimizing parameter sequences may escape.

\subsection{Comparison with Galerkin approximations}

The contrast with a classical Galerkin discretization can be stated exactly
for the quadratic model. Let \(V\subset H^1(\R^d)\) be a closed linear trial
space. The minimizer \(u_V\in V\) is characterized by
\begin{equation}
\label{eq:galerkin-orthogonality}
(u-u_V,v)_{H^1}=0,
\qquad v\in V,
\end{equation}
and is therefore the \(H^1\)-orthogonal projection of \(u\) onto \(V\).
Consequently,
\begin{equation}
\label{eq:galerkin-best-approximation}
\|u-u_V\|_{H^1}
=\inf_{v\in V}\|u-v\|_{H^1}.
\end{equation}
The discrete problem has a unique state minimizer, even though a chosen basis
may still introduce an algebraic conditioning issue.

For a general realization class \(\mathcal A\subset H^1\), linearity is not
needed to identify the infimum. Identity \eqref{eq:energy-identity} gives
\begin{equation}
\label{eq:arbitrary-class-infimum}
\inf_{v\in\mathcal A}J(v)-J(u)
=\frac12\operatorname{dist}_{H^1}(u,\mathcal A)^2.
\end{equation}
The projection mechanism is lost: a best approximant need not exist, need not be
unique, and no Galerkin orthogonality is available. In the Gaussian example of
\Cref{sec:escape}, the distance is zero although the target does not belong to
the class, so the reduced energy infimum equals the exact minimum but is not
attained. Approximation accuracy, state well-posedness, parameter
identifiability, and conditioning are therefore separate questions.

\begin{remark}[Fixed centers and random features]
If the centers \(x_1,\ldots,x_P\) are fixed in advance, then the ansatz reduces
to the linear space
\[
V_P(x_1,\ldots,x_P)
=\operatorname{span}\{G(\,\cdot-x_j):1\le j\le P\}.
\]
This includes, for instance, random-feature-type constructions, where the
centers are sampled and then kept fixed. In that setting the only trainable
variables are the output weights, and the minimization problem is linear
quadratic. The weights are determined by a positive-definite Gram system,
provided the fixed centers are distinct.

This fixed-center problem can still be numerically ill-conditioned: if two
centers are very close, the corresponding Gaussian translates are nearly
linearly dependent. However, this is a conditioning issue inside a fixed linear
space, not the nonclosedness mechanism studied here.

The coercivity gap considered in this paper appears when the centers are also
trained. Then the realization class itself can approach collision
configurations, and the reduced infimum may fail to be attained. Thus training
the centers adds adaptivity, but it also opens the door to parameter escape.

A more quantitative analysis of this mechanism, including the dependence of
the Gram matrices on the centers and their degeneration near collision
configurations, is being developed in the work in preparation \cite{daniel}.
\end{remark}
\section{Collision, nonclosedness, and nonattainment}
\label{sec:escape}

The following collision is not a mere redundancy of the parametrization. The
two neurons remain active and distinct for every \(h>0\), but their realized
states converge to a limit outside the finite Gaussian realization class.
Thus the relevant geometric fact is nonclosedness in the state topology:
\(\partial_1G\) belongs to the \(H^1\)-closure of \(\M_P\), but not to
\(\M_P\) itself, for any finite \(P\ge2\). In the elliptic example below the
exact PDE solution is chosen precisely on this boundary. The reduced loss can
therefore be driven down to the exact energy level, but no finite parameter
vector realizes the limiting state.

Let \(e_1=(1,0,\ldots,0)\) and, for \(h>0\), set
\begin{equation}
\label{eq:collision-sequence}
q_h(x):=\frac{G(x+he_1)-G(x-he_1)}{2h}.
\end{equation}
Then \(q_h\) is a two-neuron realization with nonzero weights
\(\pm(2h)^{-1}\), distinct centers \(\mp he_1\), and barycenter zero.
In one space dimension this collision has the simple state-space picture shown
in \Cref{fig:state-space-collision}: the parameter weights blow up, but the
realized functions approach the smooth derivative \(G'\).

\begin{figure}[ht]
\centering
\begin{tikzpicture}[x=1cm,y=1cm,font=\small]
  \node[font=\bfseries] at (0,1.95) {State-space collision};

  \begin{scope}
    \draw[->,black!65] (-3.35,0) -- (3.35,0) node[right] {\(x\)};
    \draw[->,black!65] (0,-1.12) -- (0,1.18);
    \draw[very thick,black,domain=-3.15:3.15,samples=150,smooth,variable=\x]
      plot ({\x},{8.5*(-\x/2)*(1/sqrt(4*pi))*exp(-\x*\x/4)});
    \draw[thick,blue!70!black,dashed,domain=-3.15:3.15,samples=150,smooth,variable=\x]
      plot ({\x},{8.5*((1/sqrt(4*pi))*exp(-(\x+1.0)*(\x+1.0)/4)
        -(1/sqrt(4*pi))*exp(-(\x-1.0)*(\x-1.0)/4))/(2*1.0)});
    \draw[thick,teal!70!black,dash dot,domain=-3.15:3.15,samples=150,smooth,variable=\x]
      plot ({\x},{8.5*((1/sqrt(4*pi))*exp(-(\x+0.55)*(\x+0.55)/4)
        -(1/sqrt(4*pi))*exp(-(\x-0.55)*(\x-0.55)/4))/(2*0.55)});
    \draw[thick,red!70!black,dotted,domain=-3.15:3.15,samples=150,smooth,variable=\x]
      plot ({\x},{8.5*((1/sqrt(4*pi))*exp(-(\x+0.25)*(\x+0.25)/4)
        -(1/sqrt(4*pi))*exp(-(\x-0.25)*(\x-0.25)/4))/(2*0.25)});
    \draw[black,very thick] (1.15,1.10) -- (1.65,1.10)
      node[right] {\(G'\)};
    \draw[blue!70!black,thick,dashed] (1.15,0.83) -- (1.65,0.83)
      node[right] {\(q_{1}\)};
    \draw[teal!70!black,thick,dash dot] (1.15,0.56) -- (1.65,0.56)
      node[right] {\(q_{0.55}\)};
    \draw[red!70!black,thick,dotted] (1.15,0.29) -- (1.65,0.29)
      node[right] {\(q_{0.25}\)};
  \end{scope}

  \begin{scope}[yshift=-2.65cm,xshift=-0.05cm]
    \draw[->,black!60] (-3.05,0) -- (3.05,0) node[right] {\(h\)};
    \draw[->,black!60] (-3.05,0) -- (-3.05,1.55);
    \draw[blue!70!black,thick] (-2.75,0.28) -- (-1.95,0.52) -- (-1.15,0.78)
      -- (-0.35,1.04) -- (0.55,1.30);
    \draw[red!70!black,thick] (-2.75,1.48) -- (-1.95,1.12) -- (-1.15,0.78)
      -- (-0.35,0.42) -- (0.55,0.15);
    \node[font=\scriptsize,blue!70!black] at (1.85,1.20) {\(\|(w_1,w_2)\|_2\)};
    \node[font=\scriptsize,red!70!black] at (1.82,0.72) {\(\|q_h-G'\|_{H^1}\)};
    \node[font=\scriptsize] at (0,-0.40) {the parameter norm grows while the state error decreases};
  \end{scope}
\end{tikzpicture}
\caption{The explicit collision in state space for \(d=1\). The two-neuron
difference quotient \(q_h=(G(\cdot+h)-G(\cdot-h))/(2h)\) approaches the smooth
state \(G'\) as the centers collapse and the weights grow like \(h^{-1}\). The
lower panel makes the same point quantitatively: the parameter norm increases
while the state error decreases.}
\label{fig:state-space-collision}
\end{figure}

\begin{proposition}[Nondegenerate collision]
\label{prop:nonclosed}
As \(h\downarrow0\),
\[
q_h\longrightarrow \partial_1G
\qquad\text{strongly in }H^1(\R^d),
\]
whereas \(\partial_1G\notin\M_P\) for every finite \(P\). Consequently,
\(\M_P\) is not closed in \(H^1(\R^d)\) for any \(P\ge2\).
The parameter norm of \eqref{eq:collision-sequence} diverges even after
quotienting by permutations, and the sequence uses neither inactive nor
repeated neurons.
\end{proposition}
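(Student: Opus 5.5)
The proof has four parts: strong $H^1$ convergence of $q_h$ by a Fourier computation, the proof that $\partial_1G\notin\M_P$, which is the only real obstacle, nonclosedness, and divergence of the parameter norm.

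\emph{Strong convergence.} By Plancherel the $H^1$ norm is $\|v\|_{H^1}^2=(2\pi)^{-d}\int(1+|\xi|^2)|\widehat v(\xi)|^2\dd\xi$. Translation gives
\[
\widehat{q_h}(\xi)=\frac{e^{ih\xi_1}-e^{-ih\xi_1}}{2h}e^{-|\xi|^2}=i\,\frac{\sin(h\xi_1)}{h}\,e^{-|\xi|^2},
\qquad
\widehat{\partial_1G}(\xi)=i\xi_1e^{-|\xi|^2}.
\]
The error therefore has modulus $|\sin(h\xi_1)/h-\xi_1|\,e^{-|\xi|^2}$. This is pointwise at most $\min(2|\xi_1|,\,h^2|\xi_1|^3/6)$ times $e^{-|\xi|^2}$. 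Integrating against $(1+|\xi|^2)$ then gives $\|q_h-\partial_1G\|_{H^1}=O(h^2)$ directly, or alternatively convergence by dominated convergence.

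\emph{Non-membership.} Suppose $\partial_1G=\sum_{j=1}^P w_jG(\cdot-x_j)$. First merge coinciding centers and drop zero weights, so that the $x_j$ are distinct and every $w_j\neq0$. Taking Fourier transforms and dividing by $e^{-|\xi|^2}>0$ gives
\[
i\xi_1=\sum_j w_j e^{-ix_j\cdot\xi}\qquad\text{for all }\xi\in\R^d.
\]
The left side is unbounded on $\R^d$. The right side is bounded by $\sum_j|w_j|$. This is a contradiction, which also covers the case where the sum is empty. The conclusion holds for every finite $P$, including $P=0,1$.

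\emph{Nonclosedness.} For $P\ge2$ we have $q_h\in\M_2\subset\M_P$, using the nestedness of the classes. Hence $\partial_1G$ lies in $\overline{\M_P}^{H^1}\setminus\M_P$.

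\emph{Parameters diverge.} The representing parameter vector $((1/(2h),-he_1),(-1/(2h),he_1))$ contains weights of modulus $(2h)^{-1}\to\infty$. Permuting the neurons leaves the multiset of weight moduli unchanged, so the divergence survives the quotient by permutations. Both weights are nonzero, so no neuron is inactive. The centers $\mp he_1$ are distinct for $h>0$, so no neuron is repeated.

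The only delicate point is the non-membership step, in particular the reduction to distinct centers. The boundedness argument avoids the need for any linear-independence lemma for exponentials.
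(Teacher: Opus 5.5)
Your proof is correct and follows essentially the same route as the paper. Convergence comes from $\widehat{q_h}=i\,h^{-1}\sin(h\xi_1)\,e^{-|\xi|^2}\to i\xi_1e^{-|\xi|^2}$ on the Fourier side, non-membership comes from dividing by the nowhere-vanishing $\widehat G$ and comparing the unbounded $i\xi_1$ with a bounded exponential sum (the paper restricts to the line $\xi=te_1$), and the weight norm is counted directly. Your preliminary merging of coincident centers and dropping of zero weights is harmless but unnecessary, since the boundedness contradiction holds for any representation, so the step you flag as delicate is not needed.
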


\begin{proof}
The convergence is the centered finite-difference formula in \(H^1\); it also
follows by dominated convergence after Fourier transformation. If
\(\partial_1G\in\M_P\), division of the Fourier identity by the nowhere-zero
\(\widehat G\) would give
\[
i\xi_1=\sum_{j=1}^P w_je^{-ix_j\cdot\xi}.
\]
On the line \(\xi=te_1\), the right-hand side is bounded for real \(t\), while
the left-hand side is unbounded. This contradiction proves
\(\partial_1G\notin\M_P\). The two weights have Euclidean norm
\((\sqrt2h)^{-1}\), which diverges and is unchanged by permutation.
\end{proof}

The collision also produces nonattainment.

\begin{theorem}[Nonattainment and parameter escape]
\label{thm:nonattainment}
Let
\[
u=\partial_1G,
\qquad
f=(-\Delta+I)\partial_1G.
\]
Then \(f\) is a Schwartz function and, for every \(P\ge2\),
\[
I_P=I,
\qquad
\operatorname*{argmin}_{v\in\M_P}J(v)=\varnothing.
\]
Every parameter-space minimizing sequence satisfies
\(\|\Theta^k\|\to\infty\). For \(P=2\), the explicit sequence in
\eqref{eq:collision-sequence} is nondegenerate in the sense described above.
\end{theorem}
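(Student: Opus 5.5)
The plan is to combine the energy identity \eqref{eq:energy-identity} with \Cref{prop:nonclosed} and \Cref{prop:abstract-parameter-escape}.

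\emph{Regularity of \(f\).} On the Fourier side,
\[
\widehat f(\xi)=(1+|\xi|^2)\,i\xi_1 e^{-|\xi|^2}.
\]
This is a polynomial times a Gaussian, hence a Schwartz function. Since the Fourier transform maps the Schwartz class to itself, \(f\) is Schwartz; in particular \(f\in H^{-1}(\R^d)\). By construction \(u=\partial_1G\in H^1\) is the weak solution of \eqref{eq:elliptic}, so \(I=J(\partial_1G)\).

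\emph{The reduced infimum equals \(I\).} By \eqref{eq:arbitrary-class-infimum},
\[
I_P-I=\tfrac12\operatorname{dist}_{H^1}(\partial_1G,\M_P)^2 .
\]
For \(P\ge2\) the classes are nested, so \(q_h\in\M_2\subset\M_P\). By \Cref{prop:nonclosed}, \(\|q_h-\partial_1G\|_{H^1}\to0\). Hence the distance is zero and \(I_P=I\).

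\emph{Nonattainment.} Suppose some \(v\in\M_P\) satisfies \(J(v)=I_P=I\). Then \eqref{eq:energy-identity} gives \(\|v-u\|_{H^1}=0\), so \(v=\partial_1G\in\M_P\). This contradicts \Cref{prop:nonclosed}. The only genuinely nontrivial ingredient here is that \(\partial_1G\notin\M_P\), and it has already been established by the Fourier division argument. So I expect no serious obstacle, only bookkeeping.

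\emph{Parameter escape.} Apply \Cref{prop:abstract-parameter-escape} with \(\Theta_P=\R^{P(d+1)}\). This requires two continuity checks.
\begin{itemize}
\item \(J\) is continuous on \(H^1\), being a continuous quadratic plus a bounded linear functional.
\item \(\Phi_P:\R^{P(d+1)}\to H^1\) is continuous. It suffices that translation \(x_j\mapsto G(\cdot-x_j)\) is continuous into \(H^1\), which follows from dominated convergence applied to \(\int(1+|\xi|^2)\,|e^{-ix\cdot\xi}-e^{-iy\cdot\xi}|^2e^{-2|\xi|^2}\dd\xi\). Linearity in the weights then gives continuity of \(\Phi_P\).
\end{itemize}
Since the infimum is not attained, every minimizing sequence satisfies \(\|\Theta^k\|\to\infty\).

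\emph{The case \(P=2\).} The statement that the sequence is nondegenerate is exactly the last part of \Cref{prop:nonclosed}. The two weights \(\pm(2h)^{-1}\) are nonzero and the centers \(\mp he_1\) are distinct, so neither inactive nor repeated neurons occur. The weight norm \((\sqrt2h)^{-1}\) diverges and is invariant under permutation. Finally, \(q_h\) is a minimizing sequence, because \(J(q_h)-I=\tfrac12\|q_h-\partial_1G\|_{H^1}^2\to0\).
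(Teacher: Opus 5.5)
Your proposal is correct and follows essentially the same route as the paper. The collision sequence gives $I_P=I$, the energy identity \eqref{eq:energy-identity} together with $\partial_1G\notin\M_P$ rules out attainment, and \Cref{prop:abstract-parameter-escape} yields parameter escape; you also check the Schwartz property of $f$ and the continuity of $\Phi_P$, which the paper only asserts, and you get $I_P=I$ from \eqref{eq:arbitrary-class-infimum} rather than from continuity of $J$, an immaterial difference.
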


\begin{proof}
By \Cref{prop:nonclosed} and the continuity of \(J\),
\(J(q_h)\to J(u)=I\), so \(I_P=I\) for every \(P\ge2\). If the infimum were
attained by \(v\in\M_P\), \eqref{eq:energy-identity} would imply \(v=u\),
contradicting \(u\notin\M_P\).

Parameter escape follows from \Cref{prop:abstract-parameter-escape}: here
\(\Theta_P=\R^{P(d+1)}\), the realization map \(\Phi_P\) is continuous into
\(H^1\), and \(J\) is continuous.
\end{proof}

\begin{remark}[Boundary search at fixed width]
\label{rem:boundary-search}
\Cref{thm:nonattainment} should be read as a boundary-search statement. For
each finite width \(P\ge2\), the target state \(u=\partial_1G\) lies on the
\(H^1\)-boundary of the nonclosed realization class \(\M_P\), but not in
\(\M_P\). Increasing \(P\) by any finite amount does not remove this obstruction:
the same state is outside every finite class \(\M_P\), while the two-neuron
collision sequence, padded with zero weights, belongs to every larger class.
Thus the reduced infimum is the exact energy value for every finite \(P\), but
it is never attained.

Consequently, an unregularized parameter search that keeps trying to minimize
the exact reduced loss has no finite optimizer to find. In this variational
sense it is forced to chase the boundary, with centers colliding and weights
blowing up. This does not mean that a computation should follow the path to
arbitrarily large parameters. On the contrary, the state error is already small
for moderately small \(h\), and an effective implementation would stop once the
state, residual, or energy tolerance is reached, or would regularize the
parameters. What fails is the naive expectation that exact minimization at fixed
width should converge to a bounded parameter vector.
\end{remark}

\begin{remark}[Interior and boundary targets]
\label{rem:interior-boundary-targets}
The example is positional. It should not be read as saying that finite
Gaussian ansatzes must always exhibit parameter escape. If the exact state
already belongs to \(\M_P\), then the reduced minimum is attained and no
boundary chase is forced. For instance, a single fixed-variance Gaussian
translate is represented exactly with \(P=1\), and a finite linear combination
of \(N\) such translates is represented exactly for every \(P\ge N\), after
padding by zero weights. In those cases the elliptic energy has an exact
realized minimizer, although the parametrization may still be nonunique.

The divergence in \Cref{thm:nonattainment} occurs because the chosen PDE
solution lies in the \(H^1\)-closure of every finite realization class
\(\M_P\), but not in any \(\M_P\) itself. The ansatz has enough expressive
power to approach the state arbitrarily well, but only by moving toward a
boundary point with no finite parameter representative. This is the usual role
of such examples in numerical analysis: they identify a mechanism by which a
method can fail at the parameter level, not a claim that every target state or
every computation must diverge.
\end{remark}

The distinction can be summarized as follows: an interior target may be reached
by bounded parameters, while a boundary target may be approached in state space
only through parameters that escape.

\subsection*{A two-neuron computation}

For \(d=1\), Plancherel's identity gives the exact error formula
\begin{equation}
\label{eq:quadrature-formula}
\|q_h-G'\|_{H^1(\R)}^2
=\frac{1}{2\pi}\int_{\R}(1+\xi^2)
\left(\frac{\sin(h\xi)}{h}-\xi\right)^2e^{-2\xi^2}\dd\xi.
\end{equation}
For the right-hand side in \Cref{thm:nonattainment}, the energy gap is one half
of this quantity. \Cref{tab:collision} reports direct quadrature of
\eqref{eq:quadrature-formula}. The table isolates the coercivity gap from any
training dynamics: the weight norm grows like \(h^{-1}\), whereas the state
error is \(O(h^2)\) and the energy gap is \(O(h^4)\).

\begin{table}[ht]
\centering
\caption{Two active neurons approaching \(G'\) in one dimension.}
\label{tab:collision}
\begin{tabular}{@{}rrrr@{}}
\toprule
\(h\) & \(\|(w_1,w_2)\|_2\) & \(\|q_h-G'\|_{H^1}\)
& \(J(q_h)-J(G')\) \\
\midrule
0.50000 &  1.414214 & \(1.45609\times10^{-2}\) & \(1.06010\times10^{-4}\) \\
0.25000 &  2.828427 & \(3.71097\times10^{-3}\) & \(6.88567\times10^{-6}\) \\
0.12500 &  5.656854 & \(9.32243\times10^{-4}\) & \(4.34539\times10^{-7}\) \\
0.06250 & 11.313708 & \(2.33343\times10^{-4}\) & \(2.72246\times10^{-8}\) \\
0.03125 & 22.627417 & \(5.83535\times10^{-5}\) & \(1.70257\times10^{-9}\) \\
\bottomrule
\end{tabular}
\end{table}

\subsection*{Regularization along the collision path}

The same example quantifies the bias--stability tradeoff created by a
total-variation penalty. In one dimension, the Fourier representation gives
\[
\widehat q_h(\xi)
=i\frac{\sin(h\xi)}{h}e^{-\xi^2},
\qquad
\widehat{G'}(\xi)=i\xi e^{-\xi^2}.
\]
Taylor expansion under the integrable weight in
\eqref{eq:quadrature-formula} yields
\begin{equation}
\label{eq:collision-asymptotic}
\|q_h-G'\|_{H^1}^2
=C_Gh^4+O(h^6),
\qquad h\downarrow0,
\end{equation}
where
\begin{equation}
\label{eq:collision-constant}
C_G:=\frac{1}{72\pi}
\int_{\R}(1+\xi^2)\xi^6e^{-2\xi^2}\dd\xi>0.
\end{equation}
Indeed,
\(\sin(h\xi)/h-\xi=-h^2\xi^3/6+O(h^4\xi^5)\), and dominated convergence
applies after squaring.

For the representing measure
\(\mu_h=(\delta_{-h}-\delta_h)/(2h)\), one has
\(\|\mu_h\|_{\TV}=h^{-1}\). Along this collision path, the regularized
functional \eqref{eq:tikhonov}, introduced below, therefore satisfies
\begin{equation}
\label{eq:regularized-collision-balance}
J_\delta(\mu_h)-J(G')
=\frac{C_G}{2}h^4+\delta h^{-2}+O(h^6).
\end{equation}
Balancing the two leading terms gives
\begin{equation}
\label{eq:regularized-collision-scales}
h_\delta\asymp\delta^{1/6},
\qquad
\|\mu_{h_\delta}\|_{\TV}\asymp\delta^{-1/6},
\qquad
\|q_{h_\delta}-G'\|_{H^1}\asymp\delta^{1/3}.
\end{equation}
Thus every fixed \(\delta>0\) prevents the singular limit, but the parameter
norm again diverges as the regularization is removed. These powers describe
the explicit collision family; they are not asserted to be optimal rates for
the global regularized problem.

\section{State convergence and quantitative Gaussian approximation}
\label{sec:state-convergence}
\label{sec:rates}

\subsection{State convergence}

Define the best-approximation error
\begin{equation}
\label{eq:deltaP}
\delta_P:=\inf_{v\in\M_P}\|u-v\|_{H^1(\R^d)}.
\end{equation}
In the notation of \Cref{subsec:gaussian-dictionary}, \(\alpha_P=I_P-I\). The
energy identity \eqref{eq:energy-identity} converts the abstract estimate
\eqref{eq:abstract-state-error-budget} into a sharp best-approximation bound
for this model.

\begin{theorem}[State convergence and optimization error]
\label{thm:state-convergence}
Let \(f\in H^{-1}(\R^d)\), and let \(u\) solve \eqref{eq:elliptic}.

\begin{enumerate}[label=(\alph*),leftmargin=*]
\item For fixed \(P\), let \((v_P^k)_k\subset\M_P\) satisfy
\(J(v_P^k)\to I_P\), and let \(u_P\) be any weak \(H^1\)-accumulation point.
Then
\begin{equation}
\label{eq:fixed-width-bound}
\|u_P-u\|_{H^1}\le\delta_P.
\end{equation}
In particular, every such selection satisfies \(u_P\to u\) strongly in
\(H^1(\R^d)\) as \(P\to\infty\).

\item If \(v_P\in\M_P\) is a diagonal family of near-minimizers satisfying
\[
J(v_P)\le I_P+\varepsilon_P,
\qquad \varepsilon_P\ge0,
\]
then
\begin{equation}
\label{eq:optimization-error}
\|v_P-u\|_{H^1}^2\le\delta_P^2+2\varepsilon_P.
\end{equation}
Thus \(v_P\to u\) strongly whenever \(\delta_P\to0\) and
\(\varepsilon_P\to0\).
\end{enumerate}
\end{theorem}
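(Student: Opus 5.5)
The plan is to specialize \Cref{thm:abstract-state-stability} to \(X=H^1(\R^d)\), \(E=J\), \(\mathcal A_P=\M_P\). Throughout, the exact identity \eqref{eq:energy-identity} replaces the strong-convexity inequality.

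The first step is to identify \(\alpha_P\) with \(\delta_P\). By \eqref{eq:arbitrary-class-infimum}, which follows directly from \eqref{eq:energy-identity} by taking the infimum over \(v\in\M_P\), we have
\[
\alpha_P=I_P-I=\tfrac12\operatorname{dist}_{H^1}(u,\M_P)^2=\tfrac12\delta_P^2 .
\]
No closedness of \(\M_P\) is needed here, since only infima appear. Because \(\lambda=1\), part (b) then follows at once from \eqref{eq:abstract-state-error-budget}:
\[
\|v_P-u\|_{H^1}^2\le 2(\alpha_P+\varepsilon_P)=\delta_P^2+2\varepsilon_P .
\]
It is cleaner still to write this directly as \(\|v_P-u\|^2=2(J(v_P)-I)\le 2(I_P-I)+2\varepsilon_P\).

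For part (a), I would first note that \(J\) is coercive, so any sequence with \(J(v_P^k)\to I_P\) is bounded in \(H^1\), and weak accumulation points exist. Let \(u_P\) be one, along a subsequence \(v_P^{k_\ell}\rightharpoonup u_P\). The map \(v\mapsto\frac12\|v-u\|_{H^1}^2\) is convex and continuous, hence weakly lower semicontinuous, and it equals \(J(v)-I\). Therefore
\[
\tfrac12\|u_P-u\|_{H^1}^2\le\liminf_{\ell\to\infty}\bigl(J(v_P^{k_\ell})-I\bigr)=I_P-I=\tfrac12\delta_P^2,
\]
which is \eqref{eq:fixed-width-bound}. This is exactly \eqref{eq:abstract-relaxed-state-bound} with \(\lambda=1\).

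The final clause of (a) and the convergence statement in (b) both need \(\delta_P\to0\). This follows from the density of Gaussian translates, via \eqref{eq:minima-converge}: \(I_P\downarrow I\) is equivalent to \(\delta_P^2=2(I_P-I)\to0\). Equivalently, by density there exist \(a_P\in\M_P\) with \(a_P\to u\) in \(H^1\), and continuity of \(J\) gives \(J(a_P)\to I\); this verifies the recovery hypothesis of the abstract theorem.

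There is no real obstacle. The one point to handle carefully is that \(u_P\) may lie outside \(\M_P\), as in \Cref{thm:nonattainment}. This is why part (a) is argued through weak lower semicontinuity of the distance functional rather than by evaluating \(J\) on \(\M_P\). The argument never uses the parameters, so it holds regardless of parameter escape.
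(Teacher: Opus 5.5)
Your proposal is correct and follows essentially the same route as the paper. Both arguments use three ingredients: boundedness from coercivity together with weak lower semicontinuity and the energy identity \eqref{eq:energy-identity} for part (a); the direct chain $\frac12\|v_P-u\|_{H^1}^2=J(v_P)-I\le I_P-I+\varepsilon_P$ for part (b); and density of Gaussian translates (Wiener) for $\delta_P\to0$. The only cosmetic difference is that you use the exact identity $I_P-I=\frac12\delta_P^2$ from \eqref{eq:arbitrary-class-infimum}, whereas the paper needs only the inequality $I_P-I\le\frac12\delta_P^2$.
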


\begin{proof}
Coercivity makes \((v_P^k)_k\) bounded. Weak lower semicontinuity and
\eqref{eq:energy-identity} give
\[
\frac12\|u_P-u\|_{H^1}^2
=J(u_P)-I
\le I_P-I
\le\frac12\delta_P^2,
\]
which proves \eqref{eq:fixed-width-bound}. Wiener's theorem gives
\(\delta_P\to0\). For part (b),
\[
\frac12\|v_P-u\|_{H^1}^2
=J(v_P)-I
\le I_P-I+\varepsilon_P
\le\frac12\delta_P^2+\varepsilon_P.
\]
\end{proof}

\begin{remark}
At fixed width, \(u_P\) may lie in
\(\overline{\M_P}^{\,H^1}\setminus\M_P\). It is a relaxed physical state,
not necessarily the realization of a finite parameter vector. The diagonal
estimate \eqref{eq:optimization-error} avoids selecting weak limits and is the
more direct statement for computed near-minimizers.
\end{remark}

\subsection{The weighted spectral estimate}

We next bound \(\delta_P\) explicitly. The proof combines Hermite spectral
truncation, finite-difference realization of the resulting Gaussian jet, and a
localization step. 

Set
\[
K(x):=e^{|x|^2/4}
\]
and, for \(m\in\N\), define
\[
H^m(K):=
\left\{v\in H^m(\R^d):
\sum_{|\alpha|\le m}\int_{\R^d}|D^\alpha v|^2K\dd x<\infty\right\}.
\]
Consider the self-adjoint operator in \(L^2(K\dd x)\)
\begin{equation}
\label{eq:weighted-operator}
\mathcal Lv=-\Delta v-\frac{x\cdot\nabla v}{2}
=-K^{-1}\operatorname{div}(K\nabla v).
\end{equation}
Its eigenspaces are spanned by derivatives of
\(e^{-|x|^2/4}=(4\pi)^{d/2}G\), and the eigenvalue corresponding to total
derivative order \(n\) is \((d+n)/2\). These eigenfunctions form an orthogonal
basis in the relevant weighted Sobolev scales; see \cite{zuazua2020}.

Let \(\Pi_L\) denote the spectral projection onto total derivative order at
most \(L\). The standard spectral tail estimate is
\begin{equation}
\label{eq:spectral-tail}
\|v-\Pi_Lv\|_{H^1(K)}
\le C_d(L+1)^{-1/2}\|v\|_{H^2(K)}.
\end{equation}
For completeness, if \(v=\sum_{n\ge0}\pi_n v\) is the orthogonal spectral
expansion and \(\mu_n=(d+n)/2\), the quadratic-form identity for
\(\mathcal L\) and the weighted elliptic estimate give
\[
\|v\|_{H^1(K)}^2\asymp_d
\sum_{n\ge0}(1+\mu_n)\|\pi_n v\|_{L^2(K)}^2,
\qquad
\sum_{n\ge0}(1+\mu_n)^2\|\pi_n v\|_{L^2(K)}^2
\le C_d\|v\|_{H^2(K)}^2.
\]
Restricting the first sum to \(n>L\) and using
\((1+\mu_n)\le(1+\mu_{L+1})^{-1}(1+\mu_n)^2\) proves
\eqref{eq:spectral-tail}. This also makes explicit why its exponent is
\(-1/2\).
The range of \(\Pi_L\) is
\[
V_L:=\operatorname{span}\{D^\alpha G:|\alpha|\le L\},
\qquad
N_L:=\dim V_L=\binom{L+d}{d}.
\]

It remains to represent Gaussian derivatives by a fixed number of Gaussian
translates.

\begin{lemma}[Realization of a Gaussian jet by colliding translates]
\label{lem:gaussian-jet}
For every \(L\ge0\),
\[
V_L\subset\overline{\M_{N_L}}^{\,H^1(\R^d)}.
\]
More precisely, for every \(q\in V_L\), there are \(N_L\) fixed, distinct
vectors \(z_1,\ldots,z_{N_L}\) and coefficients \(c_j(h)\) such that
\[
q_h(x):=\sum_{j=1}^{N_L}c_j(h)G(x-hz_j)
\longrightarrow q
\quad\text{in }H^1(\R^d)
\quad\text{as }h\downarrow0.
\]
The coefficients may grow as \(O(h^{-L})\).
\end{lemma}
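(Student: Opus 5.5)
We work on the Fourier side, where $H^1$ convergence reduces to convergence of multipliers against the fixed weight $(1+|\xi|^2)e^{-2|\xi|^2}$. Writing $q=\sum_{|\alpha|\le L}a_\alpha D^\alpha G$, we have $\widehat q(\xi)=p(i\xi)e^{-|\xi|^2}$ with $p(\eta)=\sum a_\alpha \eta^\alpha$ a polynomial of degree at most $L$. A translate mixture $\sum_j c_jG(\cdot-hz_j)$ has transform $\bigl(\sum_j c_je^{-ih z_j\cdot\xi}\bigr)e^{-|\xi|^2}$. The task is therefore to choose distinct nodes $z_1,\ldots,z_{N_L}$ and coefficients $c_j(h)$ such that the exponential sum $m_h(\xi):=\sum_j c_j(h)e^{-ihz_j\cdot\xi}$ converges to $p(i\xi)$ in $L^2\bigl((1+|\xi|^2)e^{-2|\xi|^2}\dd\xi\bigr)$. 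Since this weight integrates every polynomial, locally uniform convergence with a polynomial bound $|m_h(\xi)-p(i\xi)|\le C(1+|\xi|)^{M}$ for $h\le1$ suffices, by dominated convergence.

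\textbf{Step 1: unisolvent nodes.} Choose distinct $z_1,\ldots,z_{N_L}\in\R^d$ that are unisolvent for polynomials of total degree $\le L$, i.e.\ the $N_L\times N_L$ matrix $V=(z_j^\beta)_{j,|\beta|\le L}$ is invertible. Such sets exist generically, since $\det V$ is a nonzero polynomial in the nodes; a principal lattice $\{z\in\N^d:|z|\le L\}$ also works. We fix the nodes once.

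\textbf{Step 2: moment matching.} Expand $e^{-ihz_j\cdot\xi}=\sum_\beta \frac{(-ih)^{|\beta|}}{\beta!}z_j^\beta\xi^\beta$. We require the moments $\sum_j c_j z_j^\beta$ to reproduce $p$ exactly for $|\beta|\le L$. Concretely, solve
\[
\sum_j c_j(h)z_j^\beta=\frac{\beta!\,a_\beta\, i^{|\beta|}}{(-ih)^{|\beta|}},\qquad |\beta|\le L,
\]
which is uniquely solvable by unisolvence. The solution satisfies $c_j(h)=O(h^{-L})$, which is the claimed growth. With this choice the terms of total degree $\le L$ in the Taylor expansion of $m_h$ equal $p(i\xi)$ exactly.

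\textbf{Step 3: the remainder.} The remainder is
\[
R_h(\xi)=\sum_j c_j(h)\,r_{L+1}(-ihz_j\cdot\xi),
\]
where $r_{L+1}(s)=e^s-\sum_{k\le L}s^k/k!$ satisfies $|r_{L+1}(s)|\le|s|^{L+1}/(L+1)!$ for imaginary $s$. This gives $|R_h(\xi)|\le C h^{-L}(h|\xi|)^{L+1}=Ch|\xi|^{L+1}$. Hence
\[
\|q_h-q\|_{H^1}^2\le Ch^2\int(1+|\xi|^2)|\xi|^{2L+2}e^{-2|\xi|^2}\dd\xi\to0,
\]
at rate $O(h)$. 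Since $q_h\in\M_{N_L}$ for each $h>0$, the inclusion $V_L\subset\overline{\M_{N_L}}$ follows.

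\textbf{Main obstacle.} The delicate point is Step 1, together with the bookkeeping that the node count is exactly $N_L=\dim$ of the polynomials of degree $\le L$. It must be verified that unisolvent sets of precisely that size exist in every dimension, so that the moment system is square and invertible. Once that is in place, the remainder estimate is a routine uniform Taylor bound, because the oscillatory factors have modulus one.
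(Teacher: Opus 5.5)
Your proposal is correct and is essentially the paper's proof. Both use unisolvent nodes and the same moment system: your right-hand side $\beta!\,a_\beta i^{|\beta|}/(-ih)^{|\beta|}=\beta!\,a_\beta/(-h)^{|\beta|}$ is real and coincides with the paper's, so the weights are real as $\M_{N_L}$ requires. Both then get coefficient growth $O(h^{-L})$ from the inverse Vandermonde matrix, and an $O(h^{L+1})$ Taylor remainder that gives an $O(h)$ error. The only difference is that you check the remainder bound on the Fourier side via $|e^{it}-\sum_{k\le L}(it)^k/k!|\le |t|^{L+1}/(L+1)!$, which is exactly what justifies the paper's Taylor formula for translations in $H^1$.
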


\begin{proof}
Choose \(N_L\) points \(z_j\in\R^d\) that are unisolvent for polynomials of
total degree at most \(L\). Thus the multivariate Vandermonde matrix
\((z_j^\alpha)_{|\alpha|\le L,\,1\le j\le N_L}\) is invertible. Write
\(q=\sum_{|\alpha|\le L}a_\alpha D^\alpha G\) and choose the coefficients
\(c_j(h)\) to solve the moment equations
\begin{equation}
\label{eq:moment-equations}
\sum_{j=1}^{N_L}c_j(h)
\frac{(-h)^{|\alpha|}z_j^\alpha}{\alpha!}=a_\alpha,
\qquad |\alpha|\le L.
\end{equation}
The inverse Vandermonde matrix shows that
\(\max_j|c_j(h)|\le C(q,L)h^{-L}\) for \(0<h\le1\).

Taylor's formula for translations, taken in \(H^1\), gives
\[
G(\cdot-hz_j)
=\sum_{|\alpha|\le L}
\frac{(-h)^{|\alpha|}z_j^\alpha}{\alpha!}D^\alpha G
+r_{j,L}(h),
\qquad
\|r_{j,L}(h)\|_{H^1}\le C(L,z_j)h^{L+1},
\]
because \(G\in H^m\) for every \(m\). The moment equations cancel the Taylor
polynomial against \(q\), and hence
\[
\|q_h-q\|_{H^1}
\le\sum_{j=1}^{N_L}|c_j(h)|\,
\|r_{j,L}(h)\|_{H^1}
\le C(q,L)h\longrightarrow0.
\]
\end{proof}

The coefficient growth in \Cref{lem:gaussian-jet} is not hidden: it is another
instance of parameter escape. The lemma is an approximation statement, not a
claim of numerical conditioning.

\begin{proposition}[Weighted approximation rate]
\label{prop:weighted-rate}
There is \(C_d>0\) such that, for \(v\in H^2(K)\) and \(P\ge2\),
\begin{equation}
\label{eq:weighted-P-rate}
\inf_{w\in\M_P}\|v-w\|_{H^1(\R^d)}
\le C_d P^{-1/(2d)}\|v\|_{H^2(K)}.
\end{equation}
\end{proposition}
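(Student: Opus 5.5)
Given \(P\ge2\), the plan is to choose the largest \(L\ge0\) with \(N_L=\binom{L+d}{d}\le P\) and then chain three estimates. The spectral truncation \eqref{eq:spectral-tail} gives an element \(\Pi_Lv\in V_L\). \Cref{lem:gaussian-jet} approximates that element by a realization in \(\M_{N_L}\). The nesting \(\M_{N_L}\subset\M_P\), obtained by padding with zero weights, then places this realization in \(\M_P\). Since \(K\ge1\), we have \(\|\cdot\|_{H^1(\R^d)}\le\|\cdot\|_{H^1(K)}\), so
\[
\inf_{w\in\M_P}\|v-w\|_{H^1}
\le \|v-\Pi_Lv\|_{H^1(K)}+\inf_{w\in\M_{N_L}}\|\Pi_Lv-w\|_{H^1}
\le C_d(L+1)^{-1/2}\|v\|_{H^2(K)}+0 .
\]
The second term vanishes because \Cref{lem:gaussian-jet} places \(V_L\) inside the \(H^1\)-closure of \(\M_{N_L}\): the infimum is zero, even though it need not be attained. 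No localization step is needed here, because we only bound an infimum and accept arbitrarily large coefficients. This is exactly the parameter escape that the lemma already flags.

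It remains to convert \(L\) into \(P\). We have \(N_L=\binom{L+d}{d}\le (L+1)^d\) for \(d\ge1\), since \(\binom{L+d}{d}=\prod_{k=1}^d\frac{L+k}{k}\le (L+1)^d\). By maximality of \(L\), \(P<N_{L+1}\le(L+2)^d\), hence \(L+2>P^{1/d}\). Then \(L+1\ge \tfrac12(L+2)>\tfrac12 P^{1/d}\), which yields \((L+1)^{-1/2}\le \sqrt2\,P^{-1/(2d)}\). The case \(L=0\) occurs when \(2\le P<d+1\). It is covered by the same inequality: \(N_1=d+1\le 2^d\), so \(L+2=2>P^{1/d}\) whenever \(P<d+1\). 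It is also covered trivially by \(\|v-\Pi_0v\|_{H^1(K)}\le C\|v\|_{H^2(K)}\) with a \(d\)-dependent constant, since \(P^{-1/(2d)}\ge (d+1)^{-1/(2d)}\) there. Combining the two steps gives \eqref{eq:weighted-P-rate} with constant \(\sqrt2\,C_d\).

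The main point needing care is the validity of \eqref{eq:spectral-tail} with the constant stated. This includes the claims that \(\Pi_Lv\) lies in \(V_L\) and that \(\|\cdot\|_{H^1(K)}\) dominates the unweighted norm. The paper sketches the first via the quadratic-form identity for \(\mathcal L\); the second is immediate from \(K\ge1\). I expect the only genuinely delicate issue to be bookkeeping: \Cref{lem:gaussian-jet} requires \(N_L\) unisolvent points, which exist for every \(L\), for instance a principal lattice. The dimension count \(N_L\le P\) is then exactly what makes the jet realizable within width \(P\).
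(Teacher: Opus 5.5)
Your proposal is correct and follows essentially the same route as the paper: apply the spectral tail estimate \eqref{eq:spectral-tail}, use \Cref{lem:gaussian-jet} to place \(V_L\) in the \(H^1\)-closure of \(\M_{N_L}\), take the largest \(L\) with \(N_L\le P\), and pad with zero weights. Your explicit bound \(N_L\le(L+1)^d\), giving \((L+1)^{-1/2}\le\sqrt2\,P^{-1/(2d)}\), simply makes quantitative the paper's remark \(N_L\asymp_d(L+1)^d\).
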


\begin{proof}
Since \(K\ge1\), the unweighted \(H^1\)-norm is bounded by the weighted one.
For \(P=N_L\), combine \eqref{eq:spectral-tail} with
\Cref{lem:gaussian-jet} and take the infimum over \(\M_{N_L}\):
\[
\inf_{w\in\M_{N_L}}\|v-w\|_{H^1}
\le C_d(L+1)^{-1/2}\|v\|_{H^2(K)}.
\]
For general \(P\), take the largest \(L\) such that \(N_L\le P\), pad with
zero-weight neurons, and use \(N_L\asymp_d(L+1)^d\).
\end{proof}

\subsection{Localization to the whole space}

Write
\[
L^2_1(\R^d)
:=L^2(\R^d;(1+|x|^2)\dd x),
\qquad
\|f\|_{L^2_1}^2
:=\int_{\R^d}(1+|x|^2)|f(x)|^2\dd x,
\]
and
\[
H^1_1(\R^d)
:=\{v\in H^1(\R^d):\, v,\nabla v\in L^2_1(\R^d)\},
\qquad
\|v\|_{H^1_1}^2
:=\int_{\R^d}(1+|x|^2)(|v|^2+|\nabla v|^2)\dd x.
\]
\begin{theorem}[Constructive whole-space rate]
\label{thm:log-rate}
There is a constant \(C_d>0\) such that, for
\(u\in H^1_1(\R^d)\cap H^2(\R^d)\) and \(P\ge2\),
\begin{equation}
\label{eq:log-rate}
\inf_{w\in\M_P}\|u-w\|_{H^1(\R^d)}
\le
\frac{C_d}{\sqrt{\log(2+P)}}
\bigl(\|u\|_{H^1_1}+\|u\|_{H^2}\bigr).
\end{equation}
Consequently, the relaxed states and near-minimizers in
\Cref{thm:state-convergence} satisfy the same state rate, with the additional
optimization term shown in \eqref{eq:optimization-error}.
\end{theorem}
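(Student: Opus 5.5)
The plan is to reduce the whole-space problem to the weighted estimate of \Cref{prop:weighted-rate} through a cutoff, with the cutoff radius tuned to the neuron count. Fix \(R\ge1\) and a smooth cutoff \(\chi_R(x)=\chi(x/R)\), with \(\chi=1\) on \(B_1\), \(\supp\chi\subset B_2\), and \(|\nabla\chi_R|\le C/R\). Write
\[
u=\chi_Ru+(1-\chi_R)u.
\]

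\emph{Step 1: the tail.} The weighted norm gives
\[
\|(1-\chi_R)u\|_{H^1}\le CR^{-1}\|u\|_{H^1_1},
\]
because \(1\le|x|^2/R^2\) on the support of \(1-\chi_R\). The gradient of the cutoff contributes \(CR^{-1}\|u\|_{L^2}\).

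\emph{Step 2: the localized part.} The function \(v_R=\chi_Ru\) is supported in \(B_{2R}\), so \(K\le e^{R^2}\) there. Hence
\[
\|v_R\|_{H^2(K)}\le e^{R^2/2}\|v_R\|_{H^2}\le Ce^{R^2/2}\|u\|_{H^2}.
\]
Applying \Cref{prop:weighted-rate} to \(v_R\) yields
\[
\inf_{w\in\M_P}\|v_R-w\|_{H^1}\le C_dP^{-1/(2d)}e^{R^2/2}\|u\|_{H^2}.
\]
The triangle inequality then gives the total bound
\[
C\bigl(R^{-1}+P^{-1/(2d)}e^{R^2/2}\bigr)\bigl(\|u\|_{H^1_1}+\|u\|_{H^2}\bigr).
\]

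\emph{Step 3: balancing.} Choose
\[
R^2=\frac{1}{2d}\log(2+P).
\]
Then \(P^{-1/(2d)}e^{R^2/2}\le C\,P^{-1/(4d)}\), which decays polynomially. It is therefore dominated by
\[
R^{-1}=\sqrt{2d}\,(\log(2+P))^{-1/2}.
\]
For small \(P\), where \(R<1\), the estimate is trivial by taking \(w=0\) and absorbing the constants. This gives \eqref{eq:log-rate}.

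\emph{Step 4: consequence for computed states.} The final statement follows by inserting the bound on \(\delta_P\) into \eqref{eq:fixed-width-bound} and \eqref{eq:optimization-error} of \Cref{thm:state-convergence}.

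The main obstacle is Step 2. The Gaussian weight \(K\) grows so fast that localization costs a factor \(e^{R^2/2}\), and this is precisely why only a logarithmic rate survives. I must check that \Cref{prop:weighted-rate} applies to compactly supported \(H^2\) functions, i.e., that \(\chi_Ru\in H^2(K)\) with the stated norm bound. This is immediate from the compact support once the product-rule terms with derivatives of \(\chi_R\) are bounded uniformly for \(R\ge1\). If the appendix's weighted estimate instead required a sharper weight bound, I would localize to \(B_{2R}\) and estimate \(\sup_{B_{2R}}K\) directly. That would change only the constant in the choice of \(R^2\), not the \((\log P)^{-1/2}\) rate.
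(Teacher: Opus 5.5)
Your proposal is correct and follows the paper's proof essentially step for step. The common steps are the cutoff at radius \(R\) with tail \(CR^{-1}\|u\|_{H^1_1}\), the factor \(e^{R^2/2}\) coming from \(K\le e^{R^2}\) on \(B_{2R}\), \Cref{prop:weighted-rate}, and the choice \(R^2\approx(2d)^{-1}\log P\); the only cosmetic difference is that you use \(\log(2+P)\) and handle the small-\(P\) range (where \(R<1\)) with the trivial competitor \(w=0\), whereas the paper simply enlarges the constant there.
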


\begin{proof}
Choose \(\varphi\in C_c^\infty(\R^d)\) such that \(\varphi=1\) on \(B_1\)
and \(\varphi=0\) outside \(B_2\). For \(R\ge1\), set
\(\varphi_R(x)=\varphi(x/R)\) and \(u_R=u\varphi_R\). Direct differentiation
and the weighted tail estimate give
\begin{equation}
\label{eq:cutoff-error}
\|u-u_R\|_{H^1}\le \frac{C}{R}\|u\|_{H^1_1}.
\end{equation}
The product rule gives \(\|u_R\|_{H^2}\le C\|u\|_{H^2}\), uniformly for
\(R\ge1\). Since \(\supp u_R\subset B_{2R}\) and
\(K(x)\le e^{R^2}\) on that ball,
\begin{equation}
\label{eq:weighted-cutoff-norm}
\|u_R\|_{H^2(K)}\le Ce^{R^2/2}\|u\|_{H^2}.
\end{equation}
By \Cref{prop:weighted-rate},
\[
\inf_{w\in\M_P}\|u-w\|_{H^1}
\le \frac{C}{R}\|u\|_{H^1_1}
+Ce^{R^2/2}P^{-1/(2d)}\|u\|_{H^2}.
\]
For sufficiently large \(P\), take
\(R^2=(2d)^{-1}\log P\). The second term is then
\(CP^{-1/(4d)}\|u\|_{H^2}\), which is bounded by the right-hand side of
\eqref{eq:log-rate}. Enlarging the constant covers the remaining finite range
of \(P\).
\end{proof}

The regularity hypothesis follows from a natural weighted assumption on the
source.

\begin{lemma}[Weighted elliptic estimate]
\label{lem:weighted-elliptic}
If
\[
f\in L^2_1(\R^d)\]
then the solution of \eqref{eq:elliptic} belongs to
\(H^1_1(\R^d)\cap H^2(\R^d)\), and
\begin{equation}
\label{eq:weighted-elliptic-estimate}
\|u\|_{H^1_1}+\|u\|_{H^2}
\le C_d\|f\|_{L^2_1}.
\end{equation}
\end{lemma}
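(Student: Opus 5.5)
The plan is to prove the two parts of \eqref{eq:weighted-elliptic-estimate} separately: the unweighted \(H^2\) bound from the Fourier symbol, and the weighted \(H^1_1\) bound by a commutator argument applied to \(xu\).

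\emph{Step 1: the \(H^2\) bound.} Since \(f\in L^2_1\subset L^2\), the solution is given explicitly by \(\widehat u=\widehat f/(1+|\xi|^2)\). Plancherel then gives
\[
\|u\|_{H^2}\le C\|(1+|\xi|^2)\widehat u\|_{L^2}\le C\|f\|_{L^2}.
\]

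\emph{Step 2: the equation for \(w_k=x_ku\).} Fix \(k\in\{1,\ldots,d\}\). Using \(\Delta(x_ku)=x_k\Delta u+2\partial_ku\), we get
\[
(-\Delta+1)w_k=x_kf-2\partial_ku.
\]
The right-hand side lies in \(L^2\), with
\[
\|x_kf\|_{L^2}\le\|f\|_{L^2_1},\qquad \|\partial_ku\|_{L^2}\le C\|f\|_{L^2}.
\]
Applying Step 1 to this equation would give \(\|w_k\|_{H^2}\le C\|f\|_{L^2_1}\). In particular \(\|x_ku\|_{L^2}\) and \(\|\nabla(x_ku)\|_{L^2}\) are bounded by \(C\|f\|_{L^2_1}\).

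\emph{Step 3: back to the weighted norms.} We have \(x_k\partial_ju=\partial_j(x_ku)-\delta_{jk}u\). Hence \(\|x_k\nabla u\|_{L^2}\le\|\nabla w_k\|_{L^2}+\|u\|_{L^2}\). Summing over \(k\) and adding the unweighted \(H^1\) bound gives the \(H^1_1\) estimate, and combining with Step 1 yields \eqref{eq:weighted-elliptic-estimate}.

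\emph{Main obstacle: justifying Step 2.} A priori we do not know that \(x_ku\in H^1\), so the commutator computation and the uniqueness argument cannot be applied to \(w_k\) directly. I would first try Fourier: multiplication by \(x_k\) corresponds to \(i\partial_{\xi_k}\), and
\[
\partial_{\xi_k}\widehat u=\frac{\partial_{\xi_k}\widehat f}{1+|\xi|^2}-\frac{2\xi_k\widehat f}{(1+|\xi|^2)^2}.
\]
Here \(\partial_{\xi_k}\widehat f\in L^2\) because \(x_kf\in L^2\), and the second term is bounded in \(L^2\) by \(\|f\|_{L^2}\). Multiplying by \((1+|\xi|^2)^{1/2}\) keeps both terms in \(L^2\), so \(x_ku\in H^1\) with the required bound, and the computation is rigorous in the distributional sense. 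If this Fourier route causes trouble, the fallback is to replace \(x_k\) by the truncated weight \(x_k\varphi_R\) (the same cutoff used in the proof of \Cref{thm:log-rate}), prove bounds uniform in \(R\), and pass to the limit by monotone convergence.
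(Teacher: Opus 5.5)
Your argument is correct and is essentially the paper's Fourier-multiplier proof. Your justification of Step 2 is exactly the paper's computation of \(\nabla_\xi(m\widehat f)\) with \(m=(1+|\xi|^2)^{-1}\), and your identity \(x_k\partial_j u=\partial_j(x_ku)-\delta_{jk}u\) is the physical-space form of the paper's bound on \(\nabla_\xi(\xi_j m\widehat f)\). The difficulty you flag in Step 2 is milder than it looks: \(\Delta(x_ku)=x_k\Delta u+2\partial_ku\) holds in \(\mathcal S'\), where \(-\Delta+1\) is injective, and your truncated-weight fallback is close in spirit to the alternative weighted energy argument in the paper's appendix.
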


\begin{proof}
Let \(m(\xi)=(1+|\xi|^2)^{-1}\). Then
\(\widehat u=m\widehat f\), and the unweighted \(H^2\)-estimate follows because
\((1+|\xi|^2)m\) is bounded. The assumption \(f\in L^2_1\) is equivalent,
by Plancherel, to \(\widehat f\in H^1(\R^d)\). The multipliers
\(m\), \(\xi_jm\), and all of their first derivatives are bounded. Therefore
\[
\nabla_\xi(m\widehat f),
\qquad
\nabla_\xi(\xi_jm\widehat f),\quad j=1,\ldots,d,
\]
belong to \(L^2\), with norms bounded by \(C_d\|f\|_{L^2_1}\). A second
application of Plancherel identifies these quantities with \(xu\) and
\(x\,\partial_j u\), respectively, and proves
\eqref{eq:weighted-elliptic-estimate}.
\end{proof}

Combining \Cref{thm:state-convergence,thm:log-rate,lem:weighted-elliptic}
gives the advertised PDE estimate.

\begin{corollary}[Logarithmic state rate]
\label{cor:log-state-rate}
Assume \(f\in L^2_1(\R^d)\). If \(u_P\) is a weak accumulation point of a
fixed-width minimizing sequence as in \Cref{thm:state-convergence}, then
\[
\|u_P-u\|_{H^1}
\le\frac{C_d}{\sqrt{\log(2+P)}}\|f\|_{L^2_1}.
\]
If \(v_P\in\M_P\) satisfies \(J(v_P)\le I_P+\varepsilon_P\), then
\[
\|v_P-u\|_{H^1}
\le\frac{C_d}{\sqrt{\log(2+P)}}\|f\|_{L^2_1}
+\sqrt{2\varepsilon_P}.
\]
\end{corollary}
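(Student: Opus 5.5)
The corollary follows by chaining three results already in place: \Cref{thm:state-convergence}, \Cref{thm:log-rate}, and \Cref{lem:weighted-elliptic}. The common intermediate quantity is the best-approximation error \(\delta_P\) from \eqref{eq:deltaP}.

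First, since \(f\in L^2_1(\R^d)\), \Cref{lem:weighted-elliptic} shows that the weak solution satisfies \(u\in H^1_1(\R^d)\cap H^2(\R^d)\) with
\(\|u\|_{H^1_1}+\|u\|_{H^2}\le C_d\|f\|_{L^2_1}\). This is exactly the regularity hypothesis of \Cref{thm:log-rate}. Applying that theorem with this \(u\) gives, for \(P\ge2\),
\[
\delta_P=\inf_{w\in\M_P}\|u-w\|_{H^1}\le \frac{C_d}{\sqrt{\log(2+P)}}\bigl(\|u\|_{H^1_1}+\|u\|_{H^2}\bigr)\le\frac{C_d'}{\sqrt{\log(2+P)}}\|f\|_{L^2_1}.
\]
The case \(P=1\), if it is to be included, is handled separately. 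There \(\delta_1\le\|u\|_{H^1}\le\|f\|_{H^{-1}}\le\|f\|_{L^2_1}\), and \(\log 3\) is a fixed positive number, so enlarging the constant covers it.

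Next, the first claim is \eqref{eq:fixed-width-bound} of \Cref{thm:state-convergence}(a), namely \(\|u_P-u\|_{H^1}\le\delta_P\), combined with the bound on \(\delta_P\) above. For the second claim, \eqref{eq:optimization-error} gives \(\|v_P-u\|_{H^1}^2\le\delta_P^2+2\varepsilon_P\). Using \(\sqrt{a+b}\le\sqrt a+\sqrt b\), this becomes \(\|v_P-u\|_{H^1}\le\delta_P+\sqrt{2\varepsilon_P}\), and inserting the bound on \(\delta_P\) gives the stated estimate.

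No step is genuinely hard, since all the analytic work sits in the cited results. The only point needing care is bookkeeping of constants. The constant \(C_d\) in the corollary must absorb the constants from \Cref{lem:weighted-elliptic} and \Cref{thm:log-rate}, and it must also cover small \(P\), where \Cref{thm:log-rate} was proved only for large \(P\) and then extended by enlarging the constant. All of these constants depend only on \(d\), so a single \(C_d\) serves.
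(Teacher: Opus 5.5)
Your proposal is correct and is essentially the paper's argument: the paper proves the corollary simply by combining \Cref{thm:state-convergence}, \Cref{thm:log-rate}, and \Cref{lem:weighted-elliptic}, exactly as you do through \(\delta_P\) and \(\sqrt{a+b}\le\sqrt a+\sqrt b\). Your separate treatment of \(P=1\) via \(\delta_1\le\|u\|_{H^1}=\|f\|_{H^{-1}}\le\|f\|_{L^2_1}\) fills in a detail the paper leaves implicit, since \Cref{thm:log-rate} is stated only for \(P\ge2\).
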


\begin{remark}[Perspective on sharpness]
The logarithmic estimate is a uniform whole-space rate on bounded subsets of
\(H^1_1(\R^d)\cap H^2(\R^d)\). Three features make this the natural scale at the
level of generality considered here. First, the spectral step is sharp in the
Hermite--Gaussian Hilbert scale: the tail from \(H^2(K)\) to \(H^1(K)\) has
order \((L+1)^{-1/2}\). Second, the realization step does not add an arbitrary
discretization loss: the Hermite--Gaussian jet space
\[
V_L=\operatorname{span}\{D^\alpha G:|\alpha|\le L\}
\]
has dimension \(N_L=\binom{L+d}{d}\), and \Cref{lem:gaussian-jet} realizes an
arbitrary element of this space with \(N_L\) colliding fixed-variance
translates. This is the correct count for the uniform jet-realization mechanism
used in the proof. Third, the passage from the unweighted whole-space setting
to the exponential Hermite scale is made only through one polynomial moment,
which produces the localization balance leading to \((\log P)^{-1/2}\).

This should not be confused with a pointwise optimality statement for each
fixed target. If \(u\in H^1_1(\R^d)\cap H^2(\R^d)\) is fixed, then the
localization tail satisfies \(\|u-u_R\|_{H^1}=o(R^{-1})\). The same construction
therefore gives \(o((\log P)^{-1/2})\) for that particular \(u\). The
logarithmic estimate is instead a uniform statement, with a constant depending
only on the stated norms. We do not prove a matching minimax lower bound for
the nonlinear class \(\M_P\). Such a lower bound would have to exclude all
target-dependent choices of centers and not only the Hermite projection followed
by the uniform jet construction above. At this level of generality, and with
the Gaussian variance fixed, no sharper uniform finite-width whole-space
estimate should be expected without imposing additional decay, analyticity,
variable scales, or a different approximation structure.
\end{remark}

\subsection{Diagonal near-minimizers}

The state estimate also shows how to choose one element from each fixed-width
minimizing sequence.

\begin{proposition}[Selecting sufficiently accurate minimizing states]
\label{prop:diagonal-selection}
For each \(P\), let \((v_P^k)_{k\ge1}\subset\M_P\) satisfy
\(J(v_P^k)\to I_P\) as \(k\to\infty\).

\begin{enumerate}[label=(\alph*),leftmargin=*]
\item There is a diagonal choice \(k(P)\) such that
\begin{equation}
\label{eq:diagonal-energy}
J(v_P^{k(P)})\le I_P+\frac{1}{\log(2+P)}.
\end{equation}
If \(f\in L^2_1(\R^d)\), then
\begin{equation}
\label{eq:diagonal-log-rate}
\|v_P^{k(P)}-u\|_{H^1}
\le
\frac{C_d\bigl(1+\|f\|_{L^2_1}\bigr)}{\sqrt{\log(2+P)}}.
\end{equation}

\item More generally, if \(z_P\in\M_P\) satisfies
\[
J(z_P)\le I_P+\varepsilon+o(1)
\]
for a fixed \(\varepsilon\ge0\), then
\begin{equation}
\label{eq:fixed-optimization-tolerance}
\limsup_{P\to\infty}\|z_P-u\|_{H^1}
\le\sqrt{2\varepsilon}.
\end{equation}
\end{enumerate}
\end{proposition}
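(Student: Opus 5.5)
For part (a), the diagonal index is chosen directly from the definition of a minimizing sequence. For each fixed \(P\), \(J(v_P^k)\to I_P\) and \(1/\log(2+P)>0\), so there is a finite \(k(P)\) with \(J(v_P^{k(P)})\le I_P+1/\log(2+P)\). This is \eqref{eq:diagonal-energy}. No parameter information is used: the selection is made purely at the level of energy values, so it is compatible with parameter escape.

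For the rate \eqref{eq:diagonal-log-rate}, I apply the diagonal estimate of \Cref{cor:log-state-rate} (equivalently \Cref{thm:state-convergence}(b) combined with \Cref{thm:log-rate} and \Cref{lem:weighted-elliptic}) with \(\varepsilon_P=1/\log(2+P)\). This gives
\[
\|v_P^{k(P)}-u\|_{H^1}\le \frac{C_d}{\sqrt{\log(2+P)}}\|f\|_{L^2_1}+\sqrt{\frac{2}{\log(2+P)}}.
\]
Absorbing \(\sqrt2\) into the constant (after enlarging \(C_d\ge\sqrt2\)) yields the stated bound with factor \(1+\|f\|_{L^2_1}\). The choice \(\varepsilon_P=1/\log(2+P)\) is precisely what makes the optimization term \(\sqrt{2\varepsilon_P}\) of the same order as the approximation term, so it does not spoil the rate.

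For part (b), I use the energy identity \eqref{eq:energy-identity} and the inequality \(I_P-I\le\frac12\delta_P^2\) from the proof of \Cref{thm:state-convergence}:
\[
\tfrac12\|z_P-u\|_{H^1}^2=J(z_P)-I\le \tfrac12\delta_P^2+\varepsilon+o(1).
\]
Here only \(f\in H^{-1}\) is needed, since \(\delta_P\to0\) by the Wiener density argument behind \eqref{eq:minima-converge}. Taking \(\limsup\) gives \(\limsup\|z_P-u\|_{H^1}^2\le2\varepsilon\), and taking square roots gives \eqref{eq:fixed-optimization-tolerance}.

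There is no real obstacle here. The only point requiring care is to keep track of the fact that the \(o(1)\) term and \(\delta_P\) both vanish as \(P\to\infty\), while \(\varepsilon\) stays fixed, so that \(\varepsilon\) is the only contribution left in the limit.
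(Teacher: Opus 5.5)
Your proof is correct and follows essentially the same route as the paper: you pick $k(P)$ directly from the convergence $J(v_P^k)\to I_P$, and you read off both estimates from the optimization-error bound \eqref{eq:optimization-error} together with $\delta_P\to0$ and, for the rate, \Cref{thm:log-rate} and \Cref{lem:weighted-elliptic}. Your explicit absorption of the $\sqrt{2/\log(2+P)}$ term into $C_d\bigl(1+\|f\|_{L^2_1}\bigr)$ spells out a step the paper leaves implicit.
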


\begin{proof}
For each \(P\), convergence to \(I_P\) permits a choice satisfying
\eqref{eq:diagonal-energy}. Estimates \eqref{eq:diagonal-log-rate} and
\eqref{eq:fixed-optimization-tolerance} follow from
\eqref{eq:optimization-error}, \(\delta_P\to0\), and, in the first case,
\Cref{thm:log-rate,lem:weighted-elliptic} above.
\end{proof}

This selection is existential; it gives no bound on \(k(P)\). Any
width--iteration complexity estimate would require a separate analysis of the
chosen training dynamics.

\section{Measure-valued relaxation and regularization}
\label{sec:relaxation}

\subsection{A measure-valued relaxation of the finite-neuron problem}
\label{subsec:measure-relaxation}

We now ask whether the loss of compactness observed for finite Gaussian
mixtures disappears after passing to a measure-valued, convex relaxation of the
problem. The idea is to replace the finite collection of weights and centers by
a finite signed Radon measure. In this formulation, finite Gaussian mixtures
are recovered by atomic measures, while general measures give a larger
admissible class.

Let \(\M(\R^d)\) denote the space of finite signed Radon measures, equipped
with the total-variation norm. For \(\mu\in\M(\R^d)\), define
\[
v_\mu:=G*\mu,
\qquad
J_r(\mu):=J(v_\mu),
\qquad
I_r:=\inf_{\mu\in\M(\R^d)}J_r(\mu).
\]
Young's inequality for measures gives \(v_\mu\in H^1(\R^d)\). If
\[
\mu=\sum_{j=1}^P w_j\delta_{x_j},
\]
then
\[
G*\mu=\sum_{j=1}^P w_jG(\cdot-x_j),
\]
so the finite realization class \(\M_P\) corresponds to atomic measures with
at most \(P\) atoms. Thus the problem defining \(I_r\) is a relaxed version of
the finite-neuron problem.

The next result shows that the relaxation has the correct limiting state, but
does not restore coercivity in the representing measures.

\begin{proposition}[Relaxed minimizing states]
\label{prop:relaxed-states}
One has \(I_r=I\). If \((\mu_k)_k\) is any minimizing sequence for \(I_r\),
then
\[
G*\mu_k\longrightarrow u
\qquad\text{strongly in }H^1(\R^d).
\]
Moreover, there exist minimizing sequences whose total variation tends to
infinity. Thus the relaxed energy does not, by itself, provide a total-variation
bound on representing measures.
\end{proposition}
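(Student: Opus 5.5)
The proof splits into three steps: the identity \(I_r=I\), strong convergence of minimizing states, and a minimizing sequence with divergent total variation. All three rest on the energy identity \eqref{eq:energy-identity} together with the nonclosedness already exhibited.

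\emph{Step 1: \(I_r=I\).} For every \(\mu\in\M(\R^d)\), Young's inequality gives \(v_\mu=G*\mu\in H^1(\R^d)\). Hence \(J_r(\mu)=J(v_\mu)\ge J(u)=I\), so \(I_r\ge I\). For the reverse inequality, every atomic measure with at most \(P\) atoms is admissible, because its realization is exactly an element of \(\M_P\). This gives \(I_r\le I_P\) for every \(P\). Letting \(P\to\infty\) and using \eqref{eq:minima-converge}, which rests on Wiener density, yields \(I_r\le I\).

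\emph{Step 2: convergence of minimizing states.} If \((\mu_k)_k\) is minimizing, then \eqref{eq:energy-identity} gives
\[
\tfrac12\|G*\mu_k-u\|_{H^1}^2=J_r(\mu_k)-I_r\to0.
\]
This proves strong convergence in \(H^1\) directly, with no compactness in \(\mu\) needed. This is exactly the abstract mechanism of \Cref{thm:abstract-state-stability}(b).

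\emph{Step 3: divergent total variation.} This is the only step requiring a construction, and the plan is to do it in two ways.

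The first way works for a general \(u\). Take any minimizing sequence \((\nu_k)_k\), for instance atomic near-minimizers from Step 1. Add to \(\nu_k\) a measure \(\sigma_k\) with \(\|\sigma_k\|_{\TV}\ge k\) and \(\|G*\sigma_k\|_{H^1}\le 1/k\). A candidate is the high-frequency oscillating measure \(\sigma_k=k\cos(\lambda_k x_1)\,\mathbf 1_{B}(x)\,\dd x\), suitably normalized, with \(\lambda_k\) large. Its total variation is of order \(k\). Its Fourier transform concentrates near \(|\xi|\approx\lambda_k\), where the factor \(\widehat G=e^{-|\xi|^2}\) kills it, so \(\|G*\sigma_k\|_{H^1}\) can be made \(\le 1/k\) by choosing \(\lambda_k\) large enough. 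To get an explicit atomic version instead, take the measure \(\sigma_k = k\bigl(\delta_{0}-\delta_{\lambda_k^{-1}e_1}\bigr)\)? That choice fails: the difference quotient does not vanish. The oscillating density, or a highly oscillating alternating comb of atoms, is therefore the right choice. The main obstacle is checking carefully that \(\|G*\sigma_k\|_{H^1}\) decays fast enough in \(\lambda_k\). This should follow from Plancherel together with the Gaussian decay of \(\widehat G\) and the decay of the Fourier transform of the smooth cutoff, where \(\mathbf 1_B\) should be replaced by a smooth bump \(\chi\) so that \(\widehat\chi\) decays rapidly. By continuity of \(J\), the sequence \(\mu_k=\nu_k+\sigma_k\) is then still minimizing, while \(\|\mu_k\|_{\TV}\ge k-\|\nu_k\|_{\TV}\). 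This is not yet divergent unless \(\nu_k\) is controlled, so instead take \(\sigma_k\) with \(\|\sigma_k\|_{\TV}\ge k+\|\nu_k\|_{\TV}\).

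The second way gives a natural example for the specific right-hand side of \Cref{thm:nonattainment}, where \(u=\partial_1 G\). There the collision measures \(\mu_h=(\delta_{-he_1}-\delta_{he_1})/(2h)\) are minimizing by \Cref{prop:nonclosed} and have \(\|\mu_h\|_{\TV}=h^{-1}\to\infty\). This illustrates that divergence is forced along the natural paths, though the other measure \(-\partial_1\delta_0\) is not a finite measure.
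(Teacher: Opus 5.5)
Your proof is correct. Steps 1 and 2 coincide with the paper's argument: $I\le I_r$ trivially, $I_r\le I_P\downarrow I$ by Wiener density, and then the energy identity \eqref{eq:energy-identity} gives strong convergence. Your ``second way'' in Step 3 is exactly the paper's proof of the last claim, since the paper only exhibits the collision measures $\mu_h=(\delta_{-he_1}-\delta_{he_1})/(2h)$ for the particular target $u=\partial_1G$.

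Your ``first way'' is a genuinely different and more general route. You add to an arbitrary minimizing sequence a ghost measure with large total variation but negligible realization. This proves the divergence claim for every $f\in H^{-1}(\R^d)$, which is what the statement literally asserts. It also isolates the underlying reason: $\mu\mapsto G*\mu$ is not bounded below from $\TV$ to $H^1$.

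The paper's example buys something different, namely a target-driven obstruction. For $u=\partial_1G$, \emph{every} relaxed minimizing sequence has $\|\mu_k\|_{\TV}\to\infty$. A bounded-$\TV$ subsequence would have a weak-star limit $\mu$ with $G*\mu=\partial_1G$, by the argument in \Cref{prop:tikhonov-existence}. That would force $\widehat\mu(\xi)=i\xi_1$, which is impossible for a finite measure. Your construction only shows that divergence is possible.

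One correction: you are wrong to dismiss the atomic choice $\sigma_k=k(\delta_0-\delta_{\lambda_k^{-1}e_1})$.
\begin{itemize}
\item Since $|1-e^{-it}|\le|t|$, one has $\|G*\sigma_k\|_{H^1}\le k\lambda_k^{-1}\|\partial_1G\|_{H^1}$, which tends to zero as soon as $\lambda_k/k\to\infty$.
\item At the same time $\|\sigma_k\|_{\TV}=2k$.
\item The difference quotient survives only when the amplitude scales like $\lambda_k$.
\end{itemize}
This dipole is simpler than the oscillating density. It needs no Fourier-decay estimate for a cutoff, and it keeps the whole sequence atomic, inside $\bigcup_P\M_P$.

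A minor point: $\mu_h\to\partial_1\delta_0$ distributionally, not $-\partial_1\delta_0$.
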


\begin{proof}
Clearly \(I\le I_r\), since \(I\) is the minimum of \(J\) over all of
\(H^1(\R^d)\). Conversely, atomic measures generate the dense union
\(\bigcup_P\M_P\), and therefore \(I_r\le I\). Hence \(I_r=I\).

If \((\mu_k)_k\) is minimizing, then
\[
J(G*\mu_k)\to J(u).
\]
Using the energy identity \eqref{eq:energy-identity}, we obtain
\[
\frac12\|G*\mu_k-u\|_{H^1}^2
=
J(G*\mu_k)-J(u)\to0,
\]
which proves the strong convergence of the realized states.

For the example \(u=\partial_1G\), the measures
\[
\mu_h:=\frac{\delta_{-he_1}-\delta_{he_1}}{2h}
\]
satisfy
\[
G*\mu_h=q_h\to\partial_1G
\qquad\text{strongly in }H^1(\R^d),
\]
whereas
\[
\|\mu_h\|_{\TV}=h^{-1}\to\infty.
\]
These measures are therefore a minimizing sequence for \(I_r=I\), but they are
unbounded in total variation.
\end{proof}

Thus the relaxation reproduces the limiting state, but the representing
measures may still escape in total variation.

\subsection{Fourier interpretation of relaxed noncoercivity}

The preceding example reflects a simple Fourier mechanism: convolution with
the Gaussian smooths the measure before the energy sees it. Indeed,
\begin{equation}
\label{eq:relaxed-fourier-state}
\widehat{v_\mu}(\xi)
=e^{-|\xi|^2}\widehat\mu(\xi),
\qquad
\widehat{\nabla v_\mu}(\xi)
=i\xi e^{-|\xi|^2}\widehat\mu(\xi).
\end{equation}
Consequently, an \(H^1\)-bound on \(v_\mu\) controls only
\begin{equation}
\label{eq:relaxed-fourier-control}
(1+|\xi|^2)^{1/2}e^{-|\xi|^2}\widehat\mu(\xi)
\quad\text{in }L^2(\R^d).
\end{equation}
Because of the exponentially decaying factor, this estimate gives no bound on
\(\|\mu\|_{\TV}\). In the collision example,
\(\widehat\mu_h(\xi)=i\sin(h\xi_1)/h\) converges pointwise to
\(i\xi_1\), while \(\|\mu_h\|_{\TV}\to\infty\). The state is therefore
regularized by convolution, not by compactness of the representing measures.

Convexity of \(J_r\) in the measure variable removes the finite-particle center
nonconvexity at the formal infinite-dimensional level. However, minimizing
sequences may still leave every bounded total-variation set. Particle
discretizations of the relaxed problem can therefore reproduce the same
concentration mechanism as the original Gaussian parametrization; compare
\cite{bach2017,chizat2018,savarese2019}.

A total-variation penalty restores compactness at the measure level. For
\(\delta>0\), define
\begin{equation}
\label{eq:tikhonov}
J_\delta(\mu):=J(G*\mu)+\delta\|\mu\|_{\TV}^2.
\end{equation}

\begin{proposition}[Existence for the regularized relaxed problem]
\label{prop:tikhonov-existence}
For every \(\delta>0\), \(J_\delta\) attains its minimum on
\(\M(\R^d)\).
\end{proposition}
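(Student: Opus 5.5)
We use the direct method in the weak-* topology of \(\M(\R^d)=C_0(\R^d)^*\). First, \(J_\delta\) is bounded below: since \(J(v)\ge J(u)=I\) for every \(v\in H^1\), we have \(J_\delta(\mu)\ge I>-\infty\). Let \(m_\delta:=\inf J_\delta\), which is finite because \(J_\delta(0)=0\). Take a minimizing sequence \((\mu_k)_k\). Then
\[
\delta\|\mu_k\|_{\TV}^2\le J_\delta(\mu_k)-I\le C,
\]
so the total variations are uniformly bounded. This is exactly the compactness the unpenalized relaxation lacks (\Cref{prop:relaxed-states}). By Banach--Alaoglu (\(C_0(\R^d)\) is separable), a subsequence converges weak-* to some \(\mu\in\M(\R^d)\), and weak-* lower semicontinuity of the dual norm gives \(\|\mu\|_{\TV}\le\liminf_k\|\mu_k\|_{\TV}\).

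Next we pass to the limit in the states \(v_k:=G*\mu_k\). Young's inequality gives \(\|v_k\|_{H^1}\le \|G\|_{H^1}\|\mu_k\|_{\TV}\le C\), so after a further subsequence \(v_k\rightharpoonup v\) weakly in \(H^1\). We identify \(v=G*\mu\) by testing against \(\psi\in C_c^\infty(\R^d)\):
\[
\int v_k\psi\dd x=\int (\check G*\psi)\dd\mu_k,
\]
where \(\check G(x)=G(-x)=G(x)\). Since \(G*\psi\in C_0(\R^d)\), weak-* convergence gives \(\int v_k\psi\to\int (G*\mu)\psi\). Hence the distributional limit is \(G*\mu\), and therefore \(v=G*\mu\).

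Finally, \(J\) is convex and continuous on \(H^1\), hence weakly lower semicontinuous, so \(J(G*\mu)\le\liminf_k J(G*\mu_k)\). Combining this with the lower semicontinuity of \(\|\cdot\|_{\TV}^2\), and using \(\liminf a_k+\liminf b_k\le\liminf(a_k+b_k)\), we obtain \(J_\delta(\mu)\le m_\delta\). Thus \(\mu\) is a minimizer.

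The step that needs care is the one in the second paragraph. Weak-* convergence of measures on \(\R^d\) allows mass to escape to infinity, so we cannot expect \(\mu_k\to\mu\) in total variation, nor even narrowly. The argument only needs two things: the lower semicontinuity of the TV norm, and the identification of the weak \(H^1\) limit of the states. The latter works because the test functions \(G*\psi\) vanish at infinity, so any mass escaping to infinity is simply lost in the limit, which is consistent with lower semicontinuity.
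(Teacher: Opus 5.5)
Your proof is correct and follows the same direct-method route as the paper: the TV bound on minimizing sequences from $J\ge I$, Banach--Alaoglu via separability of $C_0(\R^d)$, weak $H^1$ convergence of $G*\mu_k$ to $G*\mu$, and weak lower semicontinuity of $J$ and of $\|\cdot\|_{\TV}$. The only cosmetic difference is how the weak limit is identified. The paper tests directly against $H^1$ functions, using that $y\mapsto (G(\cdot-y),\phi)_{H^1}$ lies in $C_0(\R^d)$. You instead first extract a weakly convergent subsequence from the Young bound and then identify its limit with $C_c^\infty$ test functions.
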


\begin{proof}
Since \(J\ge I\), every sublevel of \(J_\delta\) is bounded in total
variation. Banach--Alaoglu and the separability of \(C_0(\R^d)\) give
sequential weak-star compactness. If
\(\mu_k\rightharpoonup^*\mu\), then
\(G*\mu_k\rightharpoonup G*\mu\) in \(H^1\): testing against an
\(H^1\)-function produces a continuous function of the translation variable
that vanishes at infinity. The weak lower semicontinuity of \(J\) and of the
total-variation norm completes the direct-method argument.
\end{proof}

The discrete counterpart adds
\[
\delta\Bigl(\sum_j |w_j|\Bigr)^2
\]
to the reduced loss. This controls colliding weights, but it does not control
centers of inactive neurons, remove permutation symmetries, or make the center
optimization convex. Moreover, fixed \(\delta>0\) biases the state, while the
limit \(\delta\downarrow0\) may recover parameter escape.


\section{Kernel choice, activation functions, and persistence of the gap}
\label{sec:alternative-kernels}

The Gaussian is exceptionally smoothing because its Fourier transform decays
exponentially. A slower multiplier might improve the relation between parameter
and state norms, but only if the associated convolution can be inverted stably
in norms compatible with the PDE energy. We do not attempt a full
characterization here.

One formal alternative is the Riesz kernel
\begin{equation}
\label{eq:riesz-kernel}
H_\alpha(x)=|x|^{-\alpha},
\qquad 0<\alpha<d,
\end{equation}
understood as a tempered distribution. Its Fourier transform is
\[
\widehat H_\alpha(\xi)=C_{d,\alpha}|\xi|^{\alpha-d}.
\]
It lies at the weak-Lebesgue endpoint
\(H_\alpha\in L^{d/\alpha,\infty}(\R^d)\), rather than in \(L^1\), so all
convolution statements require the fractional-integration interpretation.
If \(v_\mu=H_\alpha*\mu\), then formally
\begin{equation}
\label{eq:riesz-gradient}
|\widehat{\nabla v_\mu}(\xi)|
=C_{d,\alpha}|\xi|^{\alpha-d+1}|\widehat\mu(\xi)|.
\end{equation}
When \(d\ge2\), the choice \(\alpha=d-1\) makes the multiplier in
\eqref{eq:riesz-gradient} of order zero:
\[
\widehat{\nabla v_\mu}(\xi)
=C_d i\frac{\xi}{|\xi|}\widehat\mu(\xi).
\]
Thus, for an \(L^2\)-density \(\mu\), the homogeneous energy
\(\|\nabla v_\mu\|_{L^2}\) is comparable to \(\|\mu\|_{L^2}\) by the
boundedness of the Riesz transforms. This is much less smoothing than the
Gaussian map.

It does not, however, yield a coercive parametrization of the full
\(H^1(\R^d)\)-norm. The \(L^2\)-part of the state requires
\[
|\xi|^{-1}\widehat\mu(\xi)\in L^2(\R^d),
\]
which creates a low-frequency obstruction; finite measures need not possess
\(L^2\)-densities; and \(H_{d-1}\) is not an \(L^1\)-activation. Riesz kernels
therefore trade Gaussian high-frequency smoothing for low-frequency and domain
difficulties. They are an instructive comparison, not a ready-made remedy.

The collision mechanism is not restricted to Gaussians. If a smooth activation
\(\sigma\) and its derivative belong to the chosen state space, then
\[
\frac{\sigma(\,\cdot+h)-\sigma(\,\cdot-h)}{2h}
\longrightarrow\sigma'
\]
in that space under the corresponding translation-continuity assumptions. If
\(\sigma'\) is not contained in the fixed realization class, the class is again
nonclosed. 

For deep architectures the activation, architecture, domain, and topology have
to be checked separately. The Gaussian construction should therefore be read as
a model mechanism, not as a theorem for all networks. In a given deep
architecture, an analogous argument would require showing that a colliding
configuration in one layer produces a limiting feature, such as a derivative or
first-order variation of an activation profile, and that the subsequent layers
propagate this limit continuously. The relevant continuity, stability, and
nondegeneracy properties are architecture-dependent.

Likewise, variable-coefficient elliptic equations, nonlinear variational
problems, bounded domains, and evolution equations can retain the coercivity
gap whenever their state functional is coercive but their realization map has
noncompact fibers or a nonclosed image. The abstract theorem in
\Cref{sec:abstract} distinguishes this PDE stability statement from the
architecture-specific realization analysis.

\section{Residual minimization and state-space hybrid modelling}
\label{sec:residual-hybrid}

\subsection{Residual minimization and PINNs}

The coercivity gap is not specific to energy minimization. It also appears in
residual-based methods, including Physics-Informed Neural Networks (PINNs) and
variational PINN formulations \cite{rojas2024}. In that setting, the loss may
control the realized state through a PDE stability estimate, while giving no
coercive control of the neural parameters. The relevant question for the
physical approximation is therefore the stability of the residual norm, not
compactness of a particular parameter representation.

For the model operator
\[
A=-\Delta+I:H^1(\R^d)\to H^{-1}(\R^d),
\]
endow \(H^{-1}(\R^d)\) with the dual norm induced by the \(H^1\)-energy inner
product. Then \(A\) is the Riesz isometry. Hence, if \(u\) solves \(Au=f\),
then
\begin{equation}
\label{eq:residual-identity}
\mathcal R(v):=\|Av-f\|_{H^{-1}}^2
=\|v-u\|_{H^1}^2.
\end{equation}
Thus residual minimization in the \(H^{-1}\)-stability norm is exactly
state-error minimization in \(H^1\). In particular, if
\[
\mathcal R(v_P)\le \inf_{w\in\M_P}\mathcal R(w)+\eta_P,
\]
then
\begin{equation}
\label{eq:residual-state-bound}
\|v_P-u\|_{H^1}^2
\le
\inf_{w\in\M_P}\|w-u\|_{H^1}^2+\eta_P.
\end{equation}
Consequently, the approximation results obtained above for the Gaussian
realization classes immediately transfer to this residual formulation: small
residual implies convergence of the realized states, but not boundedness of
the representing parameters.

This is the residual form of the same separation. The PDE stability norm
controls the physical error \(v_P-u\), whereas the neural parametrization may
still approach collision strata, exhibit weight blow-up, or otherwise lose
compactness.

The choice of residual norm is essential. A pointwise, empirical, or
collocation PINN loss is not automatically equivalent to the continuous
\(H^{-1}\)-stability norm. Such a loss yields state convergence only when it is
connected to the continuous residual by an additional sampling, quadrature, or
discrete stability estimate. Similarly, an \(L^2\)-residual controls a stronger
regularity framework: if \(f\in L^2(\R^d)\), then
\(A:H^2(\R^d)\to L^2(\R^d)\) is an isomorphism and
\[
\|v-u\|_{H^2}
\le C\|Av-f\|_{L^2}.
\]
Here the trial functions must belong to the \(L^2\)-operator domain, and the
regularity estimate must hold for the equation and domain under consideration.
These distinctions do not remove the parameter--state separation: even when the
residual guarantees convergence of the computed states in \(H^2\), it need not
yield compactness of the corresponding parameters.
\subsection{A state-space HYCO formulation for hybrid physics--data modelling}

We have so far considered two purely physics-based variational principles:
energy minimization and residual minimization. In many computational settings,
however, the loss also contains a data-misfit term, for instance when the PDE
model is combined with a finite number of measurements. We refer to such
functionals as hybrid physics--data losses.

The HYCO viewpoint \cite{hyco2025,hyco2026} is useful independently of the
parameter-coercivity issue: the physical model, the learned source, the
synthetic state, and the observations are coupled at the state level. The
purpose of this subsection is to formulate this coupling in Hilbert norms
compatible with PDE stability. The coercivity-gap discussion then adds a
separate warning: stability of the hybrid state problem does not by itself
imply compactness of the finite parameters used to represent its components.

Point evaluation is not continuous on \(H^1(\R^d)\) for \(d\ge2\). Thus,
when the hybrid loss is formulated at the energy level, point measurements
should be replaced by bounded observation functionals, for instance local
averages,
\[
\ell_i(v)=\int_{\R^d}\rho_i v\,\dd x,
\qquad \rho_i\in L^2(\R^d),
\]
or, more generally,
\[
\ell_i(v)=\ip{\rho_i}{v}_{H^{-1},H^1},
\qquad \rho_i\in H^{-1}(\R^d).
\]
This is also closer to physical measurements, which usually average the state
over a nonzero sensor volume. Genuine point observations are admissible only
when the state space has enough regularity to embed into \(C^0\). In particular,
in residual minimization measured in \(L^2\), where the natural state regularity
is \(H^2\), point observations are continuous in dimensions \(d\le3\).

To recover the concrete finite-particle representation used in hybrid
physics--data methods, one may employ independent Gaussian mixtures for the
source, physical state, and synthetic state:
\begin{equation}
\label{eq:hybrid-gaussian-components}
\begin{aligned}
f_{*,P}(x)
&=\sum_{j=1}^P f_jG(x-x_{f,j}),\\
u_{{\rm phy},P}(x)
&=\sum_{j=1}^P p_jG(x-x_{p,j}),\\
u_{{\rm syn},P}(x)
&=\sum_{j=1}^P s_jG(x-x_{s,j}).
\end{aligned}
\end{equation}
The corresponding parameter vector is
\[
\Theta_P=
\bigl((f_j,x_{f,j}),(p_j,x_{p,j}),(s_j,x_{s,j})\bigr)_{j=1}^P.
\]
Using a common width is only a notational convenience; the three widths may be
chosen independently. The loss below is evaluated on the realizations in
\eqref{eq:hybrid-gaussian-components}, while the observations remain bounded
functionals rather than ill-defined point values in \(H^1\).

For data \(y_i\in\R\), \(\nu>0\), and \(\lambda>0\), consider
\begin{equation}
\label{eq:hybrid-loss}
\begin{aligned}
\mathscr L(f_*,u_{\rm phy},u_{\rm syn})
:={}&
\frac12\|Au_{\rm phy}-f_*\|_{H^{-1}}^2
+\frac{\nu}{2}\|f_*\|_{H^{-1}}^2 \\
&+\frac{1}{2N}\sum_{i=1}^N
|\ell_i(u_{\rm syn})-y_i|^2
+\frac{\lambda}{2}\|u_{\rm phy}-u_{\rm syn}\|_{H^1}^2 .
\end{aligned}
\end{equation}

\begin{proposition}[Well-posed hybrid state problem]
\label{prop:hybrid}
The functional \(\mathscr L\) is continuous, coercive, and strongly convex on
\[
H^{-1}(\R^d)\times H^1(\R^d)\times H^1(\R^d).
\]
It therefore has a unique minimizer. If all three components are approximated
by increasing finite Gaussian spans and the computed triples are globally
near-minimizing with optimization error tending to zero, then their realized
states converge strongly to this minimizer in the product space.
\end{proposition}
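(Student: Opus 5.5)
We will work on the Hilbert space \(Z:=H^{-1}(\R^d)\times H^1(\R^d)\times H^1(\R^d)\) with the product norm, where \(H^{-1}\) carries the dual norm induced by the \(H^1\)-energy inner product. With this choice \(A\) is the Riesz isometry of \Cref{sec:residual-hybrid}, and \(\|Au\|_{H^{-1}}=\|u\|_{H^1}\). The plan has three steps. First, write \(\mathscr L\) as a quadratic form plus lower-order terms and check continuity and strong convexity. Second, get existence and uniqueness from the direct method, or from \Cref{thm:abstract-state-stability} applied with \(X=Z\). Third, obtain convergence of near-minimizers from estimate \eqref{eq:abstract-state-error-budget}.

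\textbf{Continuity and structure.} Each term is a composition of bounded linear maps with squared Hilbert norms or with the finite-dimensional map \(v\mapsto(\ell_i(v))_i\). The maps in question are \(A\), the identity, the bounded functionals \(\ell_i\), and the difference \(u_{\rm phy}-u_{\rm syn}\). Hence \(\mathscr L(z)=\frac12 Q(z)-\Lambda(z)+c\), where \(Q\) is a bounded nonnegative quadratic form, \(\Lambda\) is a bounded linear functional built from the data \(y_i\), and \(c\) is a constant. Strong convexity of such a functional reduces to the coercivity estimate \(Q(z)\ge\kappa\|z\|_Z^2\) for some \(\kappa>0\). This estimate is the main step.

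\textbf{Coercivity of \(Q\) (main obstacle).} For \(z=(g,p,s)\) we have
\[
Q(z)\ge\|Ap-g\|_{H^{-1}}^2+\nu\|g\|_{H^{-1}}^2+\lambda\|p-s\|_{H^1}^2 ,
\]
after dropping the nonnegative data term. The difficulty is that no single term controls every component, so they must be chained. The term \(\nu\|g\|^2\) controls \(g\). Using the isometry, \(\|p\|_{H^1}=\|Ap\|_{H^{-1}}\le\|Ap-g\|_{H^{-1}}+\|g\|_{H^{-1}}\), so \(\|p\|_{H^1}^2\le 2\|Ap-g\|^2+2\|g\|^2\le C(\nu)Q(z)\). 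Finally \(\|s\|_{H^1}\le\|s-p\|_{H^1}+\|p\|_{H^1}\), and \(\|s-p\|_{H^1}^2\le Q(z)/\lambda\). Together these give \(\|z\|_Z^2\le C(\nu,\lambda)Q(z)\), with \(\kappa\) depending only on \(\nu\) and \(\lambda\). The data term needs no special treatment, since it only adds a nonnegative convex quadratic. From the parallelogram identity for \(Q\), \(\mathscr L\) is then \(\kappa\)-strongly convex, and coercivity of \(\mathscr L\) follows because the linear part is bounded. A continuous convex functional is weakly lower semicontinuous, so the direct method in the reflexive space \(Z\) yields a minimizer, and strict convexity makes it unique.

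\textbf{Convergence of computed triples.} Let \(\mathcal A_P\subset Z\) be the triples whose three components lie in the given increasing finite Gaussian classes; these are nested. The abstract theorem requires recovery elements \(a_P\in\mathcal A_P\) with \(a_P\to z^*\) in \(Z\), where \(z^*\) is the minimizer. For each component we approximate the corresponding component of \(z^*\) in its own norm. For the \(H^1\) components we use the density of Gaussian translates in \(H^1\) (Wiener's theorem, as in \eqref{eq:minima-converge}). For the source component we need density in \(H^{-1}\), which follows from \(H^1\)-density together with the continuous dense embedding \(H^1\hookrightarrow H^{-1}\). Continuity of \(\mathscr L\) then gives \(\mathscr L(a_P)\to\mathscr L(z^*)\). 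Applying \Cref{thm:abstract-state-stability}(b) with \(\lambda\) replaced by \(\kappa\) gives
\[
\|z_P-z^*\|_Z^2\le\frac{2}{\kappa}(\alpha_P+\varepsilon_P)\to0 .
\]
As before, this conclusion involves no control of the parameters.
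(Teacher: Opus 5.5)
Your proposal is correct and follows essentially the same route as the paper. The chain of estimates gives coercivity and strong convexity: the \(\nu\)-term controls \(f_*\), the Riesz isometry of \(A\) controls \(u_{\rm phy}\), and the \(\lambda\)-coupling controls \(u_{\rm syn}\); the direct method then gives the unique minimizer, and density of Gaussian spans in \(H^1\), hence in \(H^{-1}\), lets you apply \Cref{thm:abstract-state-stability} in the product space. The only cosmetic difference is that you fold the quadratic part of the observation term into \(Q\) and derive strong convexity from the parallelogram identity, whereas the paper keeps that term outside \(Q\) as a separate continuous convex addition.
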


\begin{proof}
Let \(z=(f_*,u_{\rm phy},u_{\rm syn})\). The positive quadratic part of
\(\mathscr L\), without the observation term, is
\[
Q(z):=\frac12\|Au_{\rm phy}-f_*\|_{H^{-1}}^2
+\frac{\nu}{2}\|f_*\|_{H^{-1}}^2
+\frac{\lambda}{2}\|u_{\rm phy}-u_{\rm syn}\|_{H^1}^2.
\]
It controls the product norm. Indeed, \(f_*\) is controlled by the
\(\nu\)-term, and the Riesz-isometry property of \(A\) gives
\[
\|u_{\rm phy}\|_{H^1}
\le \|Au_{\rm phy}-f_*\|_{H^{-1}}
+\|f_*\|_{H^{-1}},
\]
while
\[
\|u_{\rm syn}\|_{H^1}
\le \|u_{\rm syn}-u_{\rm phy}\|_{H^1}
+\|u_{\rm phy}\|_{H^1}.
\]
Consequently there is \(c_{\nu,\lambda}>0\) such that
\[
Q(z)\ge c_{\nu,\lambda}
\bigl(
\|f_*\|_{H^{-1}}^2+\|u_{\rm phy}\|_{H^1}^2
+\|u_{\rm syn}\|_{H^1}^2
\bigr).
\]
The observation term is continuous and convex because the \(\ell_i\) are
bounded. Hence \(\mathscr L\) is coercive and strongly convex on the product
space. Existence and uniqueness follow by the direct method. Gaussian spans are
dense in \(H^1\), and consequently also dense in \(H^{-1}\); the convergence
assertion is
\Cref{thm:abstract-state-stability} applied in the product space.
\end{proof}

The proposition is stated for the global minimization problem, but the same
state-space principle is the one used in HYCO-type formulations, once combined
with the descent or convergence properties of the corresponding alternating
updates. Thus the hybrid model is not introduced here as a remedy for parameter
noncoercivity. It is a stable state-level way of coupling physics, data, and
learned components. The additional message of the present paper is that this
stability controls the realized triple \((f_*,u_{\rm phy},u_{\rm syn})\), but
not necessarily the finite-dimensional parameterization \(\Theta_P\). Strong
convergence of the realized hybrid states can therefore coexist with escape of
the parameters representing them.

\section{Scope, limitations, and open directions}

The results of this paper isolate a general mechanism: variational neural PDE
solvers may be coercive in the realized state while remaining non-coercive in
the parameters. This coercivity gap is not specific to the Gaussian model used
above. The present construction provides a transparent PDE setting in which the
phenomenon can be proved completely, but the same state-versus-parameter
distinction is intrinsic to many neural and particle-based approximation
schemes. The following directions indicate where the mechanism should be made
explicit in further settings.

\begin{itemize}[leftmargin=*]
\item \textbf{Architecture.}
The proofs have been written for a shallow, fixed-variance Gaussian radial-basis
class in order to display the mechanism in its simplest form. Smooth
activations can generate analogous difference-quotient collisions, and
fixed-size realization sets are often nonclosed \cite{petersen2021}. For each
concrete deep architecture, however, the corresponding statement has to be
proved from the realization map and from the noncompact parameter directions
that leave the realized state convergent.

\item \textbf{Optimization.}
The state estimates are formulated for globally near-minimizing sequences. This
is the natural variational level at which the coercivity gap appears. Algorithmic
training adds a second layer: one has to quantify how gradient, alternating, or
regularized dynamics approach such near-minimizers. This does not change the
state-versus-parameter mechanism; it determines which minimizing or
near-minimizing sequences are actually produced by a given optimization method.

\item \textbf{Approximation rates.}
The estimates obtained here are natural, and essentially sharp, when viewed
from the Gaussian--Hermite spectral construction on which the proof is based.
Their deterioration in the Sobolev setting comes from the passage from this
spectral framework to estimates involving only one moment of the target
function. Whether this loss is intrinsic to the approximation class, or can be
reduced by a more refined argument, remains an open question.

Faster rates require additional structure on the target, for instance stronger
decay, analyticity, adaptive scaling properties, or membership in
dimension-robust approximation spaces such as spectral Barron classes
\cite{chen2026barron}.

\item \textbf{Qualitative PDE structure.}
The coercivity gap also interacts with qualitative PDE properties. For
\(f\ge0\), the exact solution of \eqref{eq:elliptic} is nonnegative, but an
unrestricted signed Gaussian mixture need not be. Moreover,
\(v\in\M_P\) does not imply \(v^+\in\M_P\), so the standard variational argument
based on replacing a competitor by its positive part cannot be performed inside
\(\M_P\). This shows that preserving comparison principles, positivity, or
maximum principles requires realization classes designed to encode these
structures while retaining approximation power.

\item \textbf{Numerical design.}
The phenomenon identified here provides a criterion for numerical design:
methods should monitor state accuracy first, in norms dictated by PDE
stability. Parameter norms and conditioning should also be recorded, but they
should be interpreted as diagnostics of the representation and the optimizer,
not as substitutes for the physical error. Freezing or regularizing centers
converts part of the problem into a stable linear approximation, but it also
reduces adaptivity. Frozen centers may be drawn as random features or selected
through quantization and Voronoi procedures such as Lloyd's algorithm
\cite{lloyd1982}. A systematic comparison of adaptive centers, random features,
and deterministic radial-basis designs should therefore track state errors,
residuals, parameter norms, conditioning, and computational cost.

\item \textbf{Renormalization of escaping parameters.}
When parameter escape occurs, it may still be possible to extract useful
information from the escaping configuration by a suitable renormalization, in
the spirit of blow-up analysis for PDEs. For Gaussian classes, collisions of
centers and diverging amplitudes can produce derivative-type limiting profiles
through difference-quotient mechanisms. Such a procedure would not remove the
coercivity gap, since the original parameters remain noncompact, but it could
help identify the compactified representation underlying the convergent state.
Developing numerical renormalization strategies for such escaping minimizing
sequences remains an open direction.

\item \textbf{Hybrid methods.}
HYCO-type losses should also be viewed as hybrid modelling tools in their own
right. The well-posed state functional in \Cref{prop:hybrid} shows that the
realized source, physical state, and synthetic state can be coupled and
controlled in the natural product space. The coercivity-gap analysis is a
separate qualification: this state control does not make the underlying
finite-dimensional parameters coercive. The remaining work is to write the
corresponding algorithmic analysis for alternating HYCO-type updates, including
the role of coupling and regularization parameters, concentration in the three
realization classes, and noisy or incomplete observations.
\end{itemize}
\section{Conclusion}

This paper isolates a coercivity gap that is easy to miss in neural PDE
methods. A coercive PDE energy controls the realized physical state, but it need
not control the parameters used to represent that state. In the Gaussian
elliptic model this separation is complete: two active neurons collide, the
reduced minimum is not attained, and every exact minimizing parameter sequence
escapes, while the corresponding realized states converge strongly in the
energy norm.

The example should be read as a structural diagnosis rather than as a universal
divergence statement. If the exact solution already belongs to a finite
Gaussian class, as for a single Gaussian or a finite Gaussian mixture with
sufficient width, no boundary chase is forced. Parameter escape occurs here
because the chosen solution lies in the \(H^1\)-closure of each finite
realization class but outside the class itself. The ansatz is therefore rich
enough to approximate the state arbitrarily well, but only through parameter
sequences that move toward a boundary point with no finite representative.

The broader methodological message is that neural PDE solvers should be
analyzed first as state-space approximation methods and only then as
finite-dimensional parameter optimizations. The parameters are the coordinates
used by the algorithm; the PDE sees the realized state. In nonclosed
realization classes these two levels may separate: the state may converge in
the coercive PDE norm while every exact minimizing parameter sequence escapes.
This does not invalidate neural PDE solvers. It explains why they can perform
well at the level of physical fields even when the associated parameter
dynamics are unstable, nonunique, or poorly conditioned.

For computations, the first validation criterion should therefore be the state
error when it is available, and otherwise computable proxies such as energy
gaps, stability-norm residuals, or observation errors. Parameter convergence,
boundedness, and conditioning remain important, but they are secondary
diagnostics rather than the primary notion of PDE accuracy. Residual, relaxed,
and hybrid formulations fit the same viewpoint when their norms are compatible
with PDE stability. Parameter regularization can restore compactness, but it
also introduces bias and leaves the nonconvex training problem in place. The
lesson is both cautionary and positive: parameter compactness should not be
assumed, but state convergence can still be proved and should be the central
object of analysis.

\section*{Acknowledgments}

The author thanks Antonio \'Alvarez-L\'opez (Universidad Aut\'onoma de Madrid)
and Daniel Fern\'andez (Friedrich--Alexander--Universit\"at
Erlangen--N\"urnberg) for fruitful discussions.

The author was funded by the Alexander von Humboldt Professorship program, the
ERC Advanced Grant CoDeFeL, Grant PID2023-146872OB-I00-DyCMaMod of MICIU
(Spain), COST Actions CA24136 (InterCoML) and CA24122 (mSPACE), supported by
COST (European Cooperation in Science and Technology), project AFOSR 24IOE027,
and SURE-AI Centre grant 357482 of the Research Council of Norway.

\appendix

\section{Technical complements for the weighted Gaussian approximation}
\label{app:weighted-complements}

This appendix records the weighted spectral and localization details that
underlie \Cref{sec:rates}. The same self-similar operator appears in the
large-time analysis of the heat equation and in moment methods for inverse
problems; see \cite{zuazua2020,liu2026moments}.

\subsection{The self-similar elliptic operator}

Recall
\[
K(x)=e^{|x|^2/4},
\qquad
\mathcal L=-\Delta-\frac{x\cdot\nabla}{2}
=-K^{-1}\operatorname{div}(K\nabla).
\]
The weighted scalar product is
\[
(v,w)_K:=\int_{\R^d}v(x)w(x)K(x)\dd x.
\]

\begin{proposition}[Weighted spectral framework]
\label{prop:appendix-weighted-framework}
The operator \(\mathcal L\) has the following properties.

\begin{enumerate}[label=(\roman*),leftmargin=*]
\item The embedding \(H^1(K)\hookrightarrow L^2(K)\) is compact.

\item The form
\(a(v,w)=\int_{\R^d}\nabla v\cdot\nabla w\,K\dd x\)
defines a positive self-adjoint realization of \(\mathcal L\) with compact
resolvent. Equivalently, \(\mathcal L:H^1(K)\to H^1(K)'\) is an isomorphism
when the spaces are equipped with the equivalent form norms.

\item The eigenvalues, indexed by total derivative order \(n\ge0\), are
\[
\mu_n=\frac{d+n}{2},
\]
and the associated eigenspace is
\[
E_n=\operatorname{span}\{D^\alpha e^{-|x|^2/4}:|\alpha|=n\}.
\]

\item After normalization, these eigenfunctions form an orthonormal basis of
\(L^2(K)\) and an orthogonal basis in the corresponding weighted Sobolev
scales.
\end{enumerate}
\end{proposition}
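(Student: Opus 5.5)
The plan is to conjugate \(\mathcal L\) to the harmonic oscillator, or equivalently to the Ornstein--Uhlenbeck operator. Set \(v=K^{-1/2}g=e^{-|x|^2/8}g\). Then \((v,w)_K=\int gh\,\dd x\), so \(v\mapsto K^{1/2}v\) is a unitary map \(L^2(K)\to L^2(\R^d)\). A direct computation will show that, under this map, the form \(a\) becomes
\[
\int_{\R^d}\Bigl(|\nabla g|^2+\frac{|x|^2}{16}g^2\Bigr)\dd x-\frac d4\int_{\R^d} g^2\dd x ,
\]
after integrating the cross term \(-\tfrac14\int x\cdot\nabla(g^2)=\tfrac d4\int g^2\). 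Hence \(\mathcal L\) is unitarily equivalent to \(-\Delta+\tfrac{|x|^2}{16}-\tfrac d4\). This identity, together with the same computation applied to the weighted norm, is the backbone of all four items.

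For (i), the identity above shows that \(H^1(K)\) is continuously embedded, after conjugation, in the form domain of the harmonic oscillator. That domain, \(\{g:\nabla g,\,|x|g\in L^2\}\), embeds compactly in \(L^2(\R^d)\) by Rellich on balls plus the tail bound
\[
\int_{|x|>R}g^2\le R^{-2}\int |x|^2g^2 .
\]
One must also check that the conjugated \(H^1(K)\)-norm controls \(\int|x|^2g^2\). The bound \(\int|x|^2v^2K\le C(\|\nabla v\|_{L^2(K)}^2+\|v\|_{L^2(K)}^2)\) follows from the same integration by parts, via a Hardy-type inequality. I regard this as the one genuinely technical point.

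For (ii), I will work on \(L^2(K)\) with the form \(a\). This form is closed, symmetric and nonnegative on \(H^1(K)\); closedness holds because the form norm \(a+(\cdot,\cdot)_K\) is the \(H^1(K)\)-norm. The Friedrichs/Kato representation theorem then gives a self-adjoint \(\mathcal L\ge0\). Integration by parts against \(K\) identifies it with \(-K^{-1}\operatorname{div}(K\nabla\cdot)\) on smooth rapidly decaying functions. Compactness of the resolvent is equivalent to compactness of the form-domain embedding, which is (i). By Lax--Milgram, \(\mathcal L+I:H^1(K)\to H^1(K)'\) is an isomorphism. Positivity \(\mu_0=d/2>0\) upgrades this to \(\mathcal L\) itself with the equivalent norm \(a(v,v)\), since \(a(v,v)\ge\tfrac d2\|v\|_{L^2(K)}^2\) by (iii).

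For (iii) and (iv), I will verify directly that \(\phi=e^{-|x|^2/4}\) satisfies \(\nabla\phi=-\tfrac x2\phi\), so that \(K\nabla\phi=-x/2\) and \(\mathcal L\phi=\tfrac d2\phi\). I will then use the commutator
\[
[\mathcal L,\partial_j]=\tfrac12\partial_j ,
\]
which holds because \(\partial_j(x\cdot\nabla v)=\partial_jv+x\cdot\nabla\partial_jv\). It gives \(\mathcal L\partial_j v=\partial_j\mathcal Lv+\tfrac12\partial_jv\), and induction yields \(\mathcal L D^\alpha\phi=\tfrac{d+|\alpha|}{2}D^\alpha\phi\). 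Completeness comes from the conjugation: \(K^{1/2}D^\alpha\phi\) equals a polynomial of degree \(|\alpha|\) times \(e^{-|x|^2/8}\), namely scaled Hermite functions. These span a dense subspace of \(L^2(\R^d)\), because their span is all polynomials times a Gaussian, which is dense by the Fourier/analyticity argument. Consequently no other eigenvalues occur, and \(E_n\) is exactly the span of the order-\(n\) derivatives. Orthogonality of distinct eigenspaces is automatic from self-adjointness. Orthogonality in \(H^1(K)\), and more generally in the graph norms of powers of \(\mathcal L\), follows because these norms are expressed through \(a\) and \(\mathcal L\), which are diagonal in the basis.
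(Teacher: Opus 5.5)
Your route is essentially the paper's. The paper's proof is only a sketch: the form identity, the weighted Poincar\'e inequality with the growth of \(K\) for coercivity and compactness, direct differentiation for the eigenfunctions, and conjugation by \(K^{1/2}\) to a shifted harmonic oscillator for completeness, with the details deferred to the cited reference. Your commutator identity \([\mathcal L,\partial_j]=\tfrac12\partial_j\), your Friedrichs/Lax--Milgram step, and your Hermite-density argument fill in exactly those steps. The ordering is also fine: self-adjointness comes first, then completeness, then positivity.

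There is one slip to fix. As your own sentence about the cross term says, \(-\tfrac14\int x\cdot\nabla(g^2)=+\tfrac d4\int g^2\), so
\[
\int_{\R^d}|\nabla v|^2K\dd x=\int_{\R^d}\Bigl(|\nabla g|^2+\frac{|x|^2}{16}g^2\Bigr)\dd x+\frac d4\int_{\R^d}g^2\dd x .
\]
Hence \(\mathcal L\) is unitarily equivalent to \(-\Delta+\frac{|x|^2}{16}+\frac d4\), whose spectrum \(\frac d4+\frac n2+\frac d4=\frac{d+n}{2}\) agrees with (iii). With the sign you displayed, the bottom of the spectrum would be \(0\), contradicting your own \(\mu_0=\frac d2\).

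With the correct sign, the identity does more work for you. Prove it for \(v\in C_c^\infty\), which is dense in \(H^1(K)\) by truncation and mollification, and pass to the limit with Fatou. It then gives \(\int|x|^2v^2K\le16\int|\nabla v|^2K\) directly, so no separate Hardy-type inequality is needed for (i). It also gives the weighted Poincar\'e bound \(a(v,v)\ge\frac d4\|v\|_{L^2(K)}^2\), or \(\frac d2\|v\|_{L^2(K)}^2\) using the oscillator ground-state bound \(\int|\nabla g|^2+\frac{|x|^2}{16}g^2\ge\frac d4\int g^2\). So the coercivity in (ii) need not be routed through the completeness in (iii)--(iv).

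Finally, your orthogonality claim in (iv) is for graph norms of powers of \(\mathcal L\). Identifying these with the derivative-defined spaces \(H^m(K)\), \(m\ge2\), used in the spectral tail estimate needs a weighted elliptic regularity estimate. The paper does not prove this either; it takes it from the same reference.
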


\begin{proof}
The divergence identity gives symmetry and the form relation
\[
(\mathcal Lv,v)_K=\int_{\R^d}|\nabla v|^2K\dd x.
\]
The weighted Poincar\'e inequality and the growth of \(K\) yield coercivity and
compactness. Direct differentiation shows that the Gaussian derivatives are
eigenfunctions with the displayed eigenvalues. Completeness follows after
conjugating \(\mathcal L\) by \(K^{1/2}\), which reduces it to a shifted
harmonic oscillator. These standard facts, including the equivalence of the
integer spectral and derivative norms, are detailed in \cite{zuazua2020}.
\end{proof}

If \(v=\sum_{n\ge0}\pi_n v\) is the resulting spectral expansion and
\(v_L=\sum_{n=0}^L\pi_n v\), then
\begin{align*}
\|v-v_L\|_{H^1(K)}^2
&\le C_d\sum_{n>L}(1+\mu_n)\|\pi_n v\|_{L^2(K)}^2\\
&\le \frac{C_d}{1+\mu_{L+1}}
\sum_{n>L}(1+\mu_n)^2\|\pi_n v\|_{L^2(K)}^2\\
&\le \frac{C_d}{L+1}\|v\|_{H^2(K)}^2.
\end{align*}
Taking square roots gives precisely the \((L+1)^{-1/2}\) norm rate used in
\eqref{eq:spectral-tail}. The square root is essential; omitting it would lead
to the incorrect exponent \(P^{-1/d}\).

\subsection{Detailed localization estimates}

Let \(\varphi\in C_c^\infty(\R^d)\) satisfy
\[
0\le\varphi\le1,
\qquad
\varphi=1\text{ on }B_1,
\qquad
\varphi=0\text{ outside }B_2,
\]
and set \(\varphi_R(x)=\varphi(x/R)\), \(u_R=u\varphi_R\). Then
\[
u-u_R=(1-\varphi_R)u,
\qquad
\nabla(u-u_R)
=(1-\varphi_R)\nabla u-u\nabla\varphi_R.
\]
Since \(\nabla\varphi_R\) is supported in
\(B_{2R}\setminus B_R\) and
\(\|\nabla\varphi_R\|_\infty\le C/R\),
\begin{align*}
\|u-u_R\|_{H^1}
\le{}&
\|u\|_{L^2(\R^d\setminus B_R)}
+\|\nabla u\|_{L^2(\R^d\setminus B_R)}\\
&+\frac{C}{R}\|u\|_{L^2(B_{2R}\setminus B_R)}.
\end{align*}
If \(u\in H^1_1\), multiplication by \(|x|\ge R\) on the exterior region
gives
\[
\|u-u_R\|_{H^1}\le \frac{C}{R}\|u\|_{H^1_1}.
\]

For \(|\alpha|\le2\), the product rule gives
\[
D^\alpha u_R
=\sum_{\beta\le\alpha}
\binom{\alpha}{\beta}D^\beta u\,
D^{\alpha-\beta}\varphi_R,
\qquad
\|D^\gamma\varphi_R\|_\infty\le C_\gamma R^{-|\gamma|}.
\]
Consequently, for \(R\ge1\),
\begin{align*}
\|u_R\|_{H^2}
&\le C\bigl(\|u\|_{H^2}
+R^{-1}\|u\|_{H^1}+R^{-2}\|u\|_{L^2}\bigr)
\le C\|u\|_{H^2}.
\end{align*}
Finally, \(\supp u_R\subset B_{2R}\) and
\(K(x)\le e^{R^2}\) on this ball, so
\[
\|u_R\|_{H^2(K)}\le Ce^{R^2/2}\|u\|_{H^2}.
\]
Combining these three estimates with \Cref{prop:weighted-rate} produces
\[
\inf_{w\in\M_P}\|u-w\|_{H^1}
\le \frac{C}{R}\|u\|_{H^1_1}
+Ce^{R^2/2}P^{-1/(2d)}\|u\|_{H^2},
\]
from which \eqref{eq:log-rate} follows by the choice made in
\Cref{thm:log-rate}.

\subsection{An alternative weighted energy argument}

The Fourier proof of \Cref{lem:weighted-elliptic} is concise, but the weighted
\(H^1\)-estimate can also be obtained directly from the PDE\@. This derivation is
useful for variable-coefficient problems where an explicit Fourier multiplier
is unavailable.

Choose bounded smooth weights \(\rho_R\) increasing pointwise to
\(1+|x|^2\) and satisfying, uniformly in \(R\),
\begin{equation}
\label{eq:truncated-weight-properties}
1\le\rho_R\le1+|x|^2,
\qquad
|\nabla\rho_R|^2\le C\rho_R.
\end{equation}
Such weights are obtained by smoothly truncating \(1+|x|^2\) at height
\(1+R^2\). Since \(\rho_R\) and its gradient are bounded, \(\rho_Ru\) is an
admissible test function in the weak equation. We obtain
\begin{align}
\label{eq:weighted-energy-identity-appendix}
\int_{\R^d}\rho_R(|\nabla u|^2+|u|^2)\dd x
={}&\int_{\R^d}\rho_Rfu\dd x
-\int_{\R^d}u\nabla u\cdot\nabla\rho_R\dd x.
\end{align}
For arbitrary \(\varepsilon>0\), Young's inequality and
\eqref{eq:truncated-weight-properties} give
\begin{align*}
\left|\int\rho_Rfu\right|
&\le \frac14\int\rho_R|u|^2
+C\int(1+|x|^2)|f|^2,\\
\left|\int u\nabla u\cdot\nabla\rho_R\right|
&\le \varepsilon\int\rho_R|\nabla u|^2
+C_\varepsilon\int|u|^2.
\end{align*}
Choosing \(\varepsilon\) small and using the unweighted energy estimate to
control \(\|u\|_{L^2}\), we find
\begin{equation}
\label{eq:uniform-weighted-energy}
\int_{\R^d}\rho_R(|\nabla u|^2+|u|^2)\dd x
\le C\|f\|_{L^2_1}^2,
\end{equation}
with \(C\) independent of \(R\). Monotone convergence (or Fatou's lemma for a
nonmonotone smooth truncation) yields \(u\in H^1_1\). The unweighted
\(H^2\)-bound still follows from elliptic regularity. This proves
\eqref{eq:weighted-elliptic-estimate} without differentiating the Fourier
transform of \(f\).

\subsection{Explicit derivative stencils and shared neuron counts}

The space of Gaussian derivatives of total order at most \(L\) has dimension
\(N_L=\binom{L+d}{d}\). \Cref{lem:gaussian-jet} uses exactly \(N_L\)
distinct translates to approximate an arbitrary element of this space. The
connection with familiar finite differences is easiest to see in one
dimension. For \(m\ge1\), define
\begin{equation}
\label{eq:explicit-centered-stencil}
D_{h,m}G(x)
:=\frac{1}{h^m}\sum_{j=0}^m
(-1)^{m-j}\binom{m}{j}
G\!\left(x+\left(j-\frac m2\right)h\right).
\end{equation}
Then
\begin{equation}
\label{eq:stencil-convergence}
D_{h,m}G\longrightarrow G^{(m)}
\qquad\text{in }H^1(\R)
\quad\text{as }h\downarrow0.
\end{equation}
For example, \(m=1\) is the two-neuron collision used in
\Cref{sec:escape}, while \(m=2\) gives the three-point stencil
\[
\frac{G(x+h)-2G(x)+G(x-h)}{h^2}\longrightarrow G''(x).
\]
To prove \eqref{eq:stencil-convergence}, one may use Taylor's formula in
\(H^1\), or observe that the Fourier multiplier of
\eqref{eq:explicit-centered-stencil} converges to \((i\xi)^m\) and is
dominated by \(|\xi|^m\); the Gaussian supplies every required moment.

For a multi-index \(\alpha\), tensorizing the one-dimensional stencil uses
\(\prod_{r=1}^d(\alpha_r+1)\) translates to approximate \(D^\alpha G\).
Approximating every derivative separately would repeat many centers and would
not give the sharp count \(N_L\). The unisolvent construction in
\Cref{lem:gaussian-jet} instead selects \(N_L\) nodes once and solves the
moment system for the entire polynomial jet. It thereby shares the same
colliding centers among all derivatives of total order at most \(L\).

For fixed \(L\), the finite-difference step \(h\) may be chosen after the
spectral projection so that the jet-realization error is arbitrarily small.
The coefficients can nevertheless grow like \(h^{-L}\), and the inverse
Vandermonde constant depends on the selected nodes. Thus the construction
proves a best-approximation bound with a controlled neuron count, but not a
uniformly conditioned algorithm or a width--iteration complexity bound. This
is precisely the distinction between expressive approximation and stable
parameter recovery emphasized throughout the paper.

\end{document}